\documentclass[12pt]{amsart}
\usepackage[margin=1in]{geometry}                
\geometry{letterpaper}                   
\usepackage{graphicx}
\usepackage{amssymb}
\usepackage{amsthm}
\usepackage{mathrsfs}
\usepackage{caption}
\usepackage{xcolor}
\usepackage{bbm}

\newtheorem{theorem}{Theorem}[section]
\newtheorem{proposition}[theorem]{Proposition}
\newtheorem{lemma}[theorem]{Lemma}
\newtheorem{definition}[theorem]{Definition}

\numberwithin{equation}{section}

 \setlength{\parskip}{0.3em} 

\newcommand{\R}{\mathbb{R}}
\newcommand{\N}{\mathbb{N}}
\newcommand{\Z}{\mathbb{Z}}

\newcommand{\F}{\mathbb{F}}
\renewcommand{\i}{\vec{\imath}}
\renewcommand{\j}{\vec{\jmath}}

\begin{document}

\title{BRK-type sets over finite fields} 
\author{Charlotte Trainor}	
	
\date{\today}
\subjclass{Primary 05B25; Secondary 11T99}
\keywords{Finite fields, Kakeya, Besicovitch-Rado-Kinney sets, polynomial method. }

\maketitle

\begin{abstract}

 A Besicovitch-Rado-Kinney (BRK) set in $\R^n$ is a Borel set that  contains a $(n-1)$-dimensional sphere of radius $r$, for each $r>0$. It is known that such sets have Hausdorff dimension $n$ from the work of Kolasa and Wolff. In this paper, we consider an analogous problem over a finite field, $\F_q$. We define BRK-type sets in $\F_q^n$, and establish lower bounds on the size of such sets using techniques introduced by Dvir's proof of the finite field Kakeya conjecture. 

\end{abstract}

\section{Introduction}

Let $n\geq2$. We call $E\subset\R^n$ a \emph{Kakeya set} if for every direction $v\in\mathbb{S}^{n-1}$, the set $E$ contains a unit line segment in direction $v$, i.e., there is some point $a\in\R^n$ so that
\[ \{a+tv:t\in[0,1]\}\subset E. \]
The \textit{Kakeya conjecture} posits that a Kakeya set in $\R^n$ has Hausdorff dimension $n$. While the conjecture has been established by Davies for dimension $n=2$ \cite{Davies}, the conjecture in dimensions $n\geq3$  remains a major open problem in harmonic analysis.

In his influential survey on the Kakeya problem, Wolff formulated the \textit{finite field Kakeya conjecture} as a simpler prototype for the Euclidean problem \cite{Wolffsurvey}. Let $\F_q$ be a finite field with $q$ elements. Then a set $S\subset\F_q^n$
is a \emph{Kakeya set} if for every non-zero $b\in\F_q^n$, there is some $a\in\F_q^n$ so that 
\[ \{a+tb:t\in\F_q\}\subset S. \]
Wolff conjectured that there is some constant $C_n>0$ so that for any Kakeya set $S\subset\F_q^n$, $|S|\geq C_nq^n$, where $|S|$ denotes the number of elements in $S$. 

In 2008, Dvir published a proof of the finite field Kakeya conjecture using, and popularizing, the \textit{polynomial method} \cite{Dvir}.
An improved bound was obtained by Dvir, Kopparty, Saraf and Sudan  using a variant of the polynomial method, known as the \textit{method of multiplicities}~\cite{methodmult}. Ellenberg, Oberlin, and Tao \cite{EOT} and Hickman and Wright \cite{HW} formulated the Kakeya problem over more general rings, such as the $p$-adic integers and the ring of integers mod $N$, $\Z_N$. Using a variant of the polynomial method, Dhar and Dvir established the  Kakeya conjecture over $\Z_N$ for square free $N$ \cite{DD}, and Arsovski established the $p$-adic Kakeya conjecture \cite{padic}. In \cite{Dhar}, Dhar combined the methods from~\cite{DD} and~\cite{padic} to solve the Kakeya problem over $\Z_N$, for general $N$.

\subsection{Besicovitch-Rado-Kinney sets} 

We say that $E\subset\R^n$ is a \emph{Besicovitch-Rado-Kinney (BRK) set} if $E$ is Borel, and for each $r>0$, there is some point $a\in\R^n$ so that
\[ \{a+rx:|x|=1\}\subset E. \]
So, BRK sets contain a sphere of radius $r$, for every such $r$. Both Besicovitch-Rado \cite{BR} and Kinney \cite{Kinney} showed that there exists BRK sets of measure zero.

As a variant of the Kakeya problem, we may consider the dimension of BRK sets in $\R^n$. In \cite{KolasaWolff}, Kolasa and Wolff quantified the Hausdorff dimension of a BRK set via estimates on a maximal operator known as Wolff's circular maximal function. Using this method, they showed that for $n\geq3$, BRK sets in $\R^n$ have Hausdorff dimension $n$.
Later, in \cite{WolffBRK}, Wolff showed that all BRK sets in $\R^2$ have Hausdorff dimension~$2$.


Kolasa and Wolff also introduced analogous problems in dimension $n=2$ with circles replaced by more general curves. In higher dimensions, BRK sets have been generalized in~\cite{Sawyer, Wisewell}, who construct measure zero subsets of $\R^n$ given by unions of $d$-dimensional surfaces.


\subsection{Analogous problem over finite fields} Let $\F_q$ be a finite field with $q$ elements. We define BRK-type sets over $\F_q$ and determine a lower bound on their size, in analogy with the question of the dimension of BRK-type sets over $\R$.
We define BRK-type sets over $\F_q$ as follows:

\begin{definition}
\label{def-brk}
 Let $g\in\F_q[s_1,\dots,s_{n-1}]$ be a homogeneous polynomial of degree $\ell\geq2$, and let $\mathcal{P}_g\subset\F_q[s_1,\dots,s_{n-1}]$ be the set of all polynomials with homogeneous part of highest degree equal to $g$. We say that $S\subset \F_q^n$ is a \emph{BRK-type set of degree $\ell$} if for any $\rho\in\F_q$, there exist $a=a(\rho)\in\F_q^n$ and $g_\rho\in\mathcal{P}_g$ so that 
\begin{equation}
\label{brkdef}
\{a+\rho (\lambda, g_\rho(\lambda)):\lambda\in\F_q^{n-1}\}\subset S.
\end{equation}
\end{definition}
We require $\ell\geq2$, as if we allowed $\ell=1$, then a hyperplane in $\F_q^n$ would be a BRK-type set of degree $1$. 

For example, suppose $S\subset\F_q^2$ is a BRK-type set satisfying~(\ref{brkdef}) with $g_\rho(s)=s^2$ for all $\rho$. Then for any non-zero $\rho\in\F_q$, there is some $a=(a_1,a_2)$ so that $S$ contains all solutions to the equation
\[ y-a_2=\rho^{-1}(x-a_1)^2. \]
Thus for any non-zero $c\in\F_q$, $S$ contains a parabola with abscissa equal to $c$.  We can also define BRK sets where the polynomials $g_\rho$ depend on $\rho$; say for $n=2$, we could take $g_\rho(s)=s^3+h_\rho(s)$ where $h_\rho$ is a polynomial of degree at most $2$.


Previous generalizations of Kakeya and BRK sets have been studied in~\cite{EOT},~\cite{sphere} and~\cite{cone}. In~\cite{sphere}, Makhul, Warren and Winterhof define a sphere of radius $r$ in $\F_q^n$ to be a subset 
\[ \{x\in\F_q^n: (x_1-a_1)^2+\dots+(x_n-a_n)^2=r\}. \]
Notice that this set cannot be written in the form~(\ref{brkdef}), which we consider. They prove lower bounds on the size of sets containing spheres of any radius using known bounds on the number of solutions to quadratic equations over $\F_q$. 

In~\cite{cone}, Warren and Winterhof define parabolas, hyperbolas, and ellipses in $\F_q^n$, and consider subsets of $\F_q^n$ containing many such \textit{curves}. This differs from our definition, as we can think of~(\ref{brkdef}) as a \textit{hypersurface} in $\F_q^n$. 

Our main result is the following:

\begin{theorem}
\label{thm}
Let $\ell\in \N$ with $2\leq \ell<q$. Let $S\subset\F_q^n$ be a BRK-type set of degree $\ell$, as defined in Definition~\ref{def-brk}.
Then 
\[ |S|\geq \dfrac{(q-1)^n}{(\ell+1-2\ell/q)^n }.\]
\end{theorem}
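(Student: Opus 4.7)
I plan to prove the bound using Dvir's polynomial method combined with the method of multiplicities as in~\cite{methodmult}, adapted via a weighted degree that matches the BRK-type parameterization. Assign weight $1$ to each of $x_1, \ldots, x_{n-1}$ and weight $\ell$ to $x_n$, writing $w(P)$ for the maximal weighted degree of a monomial in $P$. Because $g_\rho \in \mathcal{P}_g$ has top homogeneous part $g$ of degree exactly $\ell$, the substituted polynomial $F_\rho(\lambda) := P(a(\rho) + \rho(\lambda, g_\rho(\lambda)))$ in $\lambda \in \F_q^{n-1}$ has total degree at most $w(P)$. This is the natural weighting: each of the first $n-1$ coordinates of the BRK parameterization is linear in $\lambda$, while the last is degree $\ell$.

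Suppose for contradiction that $|S|$ is strictly less than the claimed bound. For a multiplicity parameter $s \geq 1$ to be optimized, a dimension count in the space of polynomials of weighted degree at most $W$ vanishing to order $s$ on $S$ yields a nonzero such $P$ for a suitable $W < sq$. For each $\rho \in \F_q$, the BRK property implies $F_\rho$ vanishes to order $s$ on all of $\F_q^{n-1}$, and the Schwartz--Zippel bound with multiplicities in $n - 1$ variables then forces $F_\rho \equiv 0$. Extracting the top weighted-degree component of $F_\rho$ and grouping by the exponent $e_n$ of $x_n$ yields an expression of the form $\sum_{e_n \geq 0} \rho^{W - (\ell-1) e_n} g(\lambda)^{e_n} R_{e_n}(\lambda)$, a polynomial in $\rho$ whose coefficients factor through powers of $g$. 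From $F_\rho \equiv 0$ for each of $q$ values of $\rho$, together with control on the $\rho$-degree, one forces every $R_{e_n} = 0$; since $g \neq 0$ and $\F_q[\lambda_1, \ldots, \lambda_{n-1}]$ is an integral domain, all weighted-top coefficients of $P$ vanish, contradicting the definition of $W$.

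The main obstacle is matching the exact constant $\ell + 1 - 2\ell/q$. The naive weighted-degree Dvir argument (taking $s = 1$ and $W = q - 1$) only gives a bound of order $(q-1)^n/(\ell \cdot n!)$, which is weaker than the stated bound for large $n$. Since the $\ell = 1$ specialization of the stated bound is $(q/2)^n$, precisely the DKSS improvement of Dvir's Kakeya bound, one must combine the method of multiplicities with a multiplicity-enhanced top-part analysis: show that the weighted-top part $P_W$ itself vanishes to order $s$ at each point on the image of $\lambda \mapsto (\lambda, g(\lambda))$, and apply Schwartz--Zippel with multiplicities a second time to conclude $P_W = 0$ even when $W$ is as large as $\sim sq/(\ell+1)$. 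Careful propagation of multiplicities through the factorization $g(\lambda)^{e_n} R_{e_n}(\lambda)$, uniformly in the $\rho$-dependence of $g_\rho$, and then optimizing over $s$ in the final dimension count, should yield the claimed constant $\ell + 1 - 2\ell/q$.
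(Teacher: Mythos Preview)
Your overall plan---polynomial method with multiplicities, weighted-degree stratification, extraction of the top weighted part---matches the paper's architecture. But the step where you write ``From $F_\rho \equiv 0$ for each of $q$ values of $\rho$, together with control on the $\rho$-degree, one forces every $R_{e_n} = 0$'' is a genuine gap. Once the multiplicity $s$ exceeds $1$, the polynomial in $\rho$ that you extract has degree up to $W \sim s(q-1)$, far larger than $q-1$, so vanishing at $q$ values of $\rho$ is insufficient. Your third paragraph acknowledges the shortfall but the proposed fix, ``apply Schwartz--Zippel with multiplicities a second time to conclude $P_W = 0$,'' does not name a workable mechanism: Schwartz--Zippel applies to grids, not to the hypersurface $\{(\lambda,g(\lambda))\}$, and ``propagation of multiplicities through the factorization $g(\lambda)^{e_n}R_{e_n}(\lambda)$'' is not a recognizable argument.

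The paper's resolution is different and specific. One uses not just $F_\rho\equiv 0$ but the vanishing of \emph{all} Hasse derivatives $P^{(\beta)}$ (for $|\beta|<k$) after composition with the parametrization. Isolating a single $\lambda$-monomial in the weighted-top part yields, for each such $\beta$, the relation $\sum_\alpha b^{\alpha_n-\beta_n}c_\alpha\binom{\alpha}{\beta}\rho^{|\alpha|-|\beta|}=0$ for every nonzero $\rho$. The key identity is Vandermonde, $\sum_{|\beta|=w}\binom{\alpha}{\beta}=\binom{|\alpha|}{w}$: summing the relations over $|\beta|=w$ shows that the $w$-th Hasse derivative (in $\rho$) of the univariate polynomial $f_0(\rho)=\sum_\alpha b^{\alpha_n}c_\alpha\rho^{|\alpha|}$ vanishes on $\F_q\setminus\{0\}$ for every $w<k$. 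Thus $f_0$ vanishes with multiplicity $k$ on $q-1$ points while having degree $<k(q-1)$, so it is identically zero; since $\ell\ge 2$ forces the exponents $|\alpha|$ to be distinct, all $c_\alpha=0$. This Vandermonde step is what converts $x$-multiplicity into $\rho$-multiplicity, and it is absent from your outline.

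A secondary point: the paper runs the dimension count with \emph{ordinary} degree $D$ and multiplicity $M$ (not with weighted degree), choosing $D=k(q-1)-1$ and $M=(\ell+1)k-2\ell k/q$ and letting $k\to\infty$; the weighted degree enters only to stratify exponents and isolate the top layer. If you instead bound the dimension of weighted-degree-$\le W$ polynomials, you must separately track the ordinary degree (which governs the $\rho$-exponent) and reconcile the two constraints; it is not clear your parametrization recovers the constant $\ell+1-2\ell/q$.
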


We will prove this theorem by reducing it to a ``homogeneous version'' of the problem. In this statement, we will use some terms related to polynomials that we define in detail in section 2; in particular, Hasse derivatives are defined in Definition~\ref{def-Hasse}. 
\begin{proposition}
\label{prop}
Let $n\geq 2$. Let $Q\in \F_q[x_1,\dots,x_n]$ be a polynomial of degree $d$. Suppose that there are  $\ell,m\in\Z_{\geq0}$ with $\ell\geq2$ so that each exponent $\alpha\in \Z_{\geq0}^n$ appearing in $Q$ satisfies
\[ \alpha_1+\dots+\alpha_{n-1}+\ell\alpha_n=m. \]
Let $f\in\F_q[s_1,\dots,s_{n-1}]$ be a homogeneous polynomial of degree $\ell$.

For $\beta\in\Z_{\geq0}^n$, let $Q_\beta=Q^{(\beta)}$, the $\beta$-th Hasse derivative of $Q$.  Let $k\in\N$ be such that $d<k(q-1)$ and the following holds: for all non-zero $\rho\in\F_q$ and all $\beta\in\Z_{\geq0}^n$ with $\beta_1+\dots+\beta_n<k$, the polynomial
\[ Q_{\beta,\rho}(s):=Q_\beta(\rho(s,f(s)))\]
is the zero polynomial (meaning all its coefficients are zero). 
Then $Q$ is the zero polynomial.
\end{proposition}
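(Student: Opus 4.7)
The plan is to show that $Q$ vanishes on a Zariski-dense subset of $\overline{\F_q}^n$ by controlling its restriction to the family of parameterized curves $\gamma_{s_0}(t) := (ts_0,\,tf(s_0))$ indexed by $s_0 \in \overline{\F_q}^{n-1}$. The crucial observation is that the hypothesis ``$Q_{\beta,\rho}(s)$ is the zero polynomial in $s$'' is a \emph{formal polynomial identity}, so substituting $s = s_0$ remains valid for any $s_0 \in \overline{\F_q}^{n-1}$ (not just for $s_0 \in \F_q^{n-1}$). This yields $Q_\beta(\rho s_0,\,\rho f(s_0)) = 0$ for every non-zero $\rho \in \F_q$ and every $\beta$ with $\beta_1+\cdots+\beta_n < k$; equivalently, $Q$ has Hasse multiplicity at least $k$ at each point $\gamma_{s_0}(\rho)$.

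Next I would pull $Q$ back along $\gamma_{s_0}$. The composition $R_{s_0}(t) := Q(\gamma_{s_0}(t)) \in \overline{\F_q}[t]$ has degree at most $d$ in $t$, since each monomial $x^\alpha$ of $Q$ contributes a factor $t^{\alpha_1+\cdots+\alpha_n}$ with $\alpha_1+\cdots+\alpha_n \leq d$. The chain rule for Hasse derivatives (a standard tool in the polynomial method) then guarantees that $R_{s_0}$ inherits Hasse multiplicity at least $k$ at each $t = \rho \in \F_q^*$, giving a total root count (with multiplicity) of at least $k(q-1)$. Since $\deg R_{s_0} \leq d < k(q-1)$, we conclude $R_{s_0} \equiv 0$, so $Q$ vanishes along the entire image of $\gamma_{s_0}$ in $\overline{\F_q}^n$.

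Finally, I need to show that vanishing on every such curve forces $Q \equiv 0$. I would argue that the image of the morphism $\phi:(t,s_0) \mapsto (ts_0,\,tf(s_0))$ from $\overline{\F_q}\times\overline{\F_q}^{n-1}$ to $\overline{\F_q}^n$ is Zariski-dense: given any $(x_1,\ldots,x_n)$ with $x_n \neq 0$ and $f(x_1,\ldots,x_{n-1}) \neq 0$, the homogeneity of $f$ of degree $\ell$ reduces the equation $\phi(t,s_0) = (x_1,\ldots,x_n)$ to $t^{\ell-1} = f(x_1,\ldots,x_{n-1})/x_n$, which is solvable in $\overline{\F_q}$ since $\ell - 1 \geq 1$, after which we set $s_0 = (x_1,\ldots,x_{n-1})/t$. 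Thus the complement of the image lies in the proper closed subset $\{x_n = 0\} \cup \{f(x_1,\ldots,x_{n-1}) = 0\}$ (and $f \not\equiv 0$ because $\deg f = \ell \geq 2$), so $Q$ vanishes on a dense open set, forcing $Q \equiv 0$.

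The main technical subtlety is the careful use of the chain rule for Hasse derivatives, which (unlike the ordinary-derivative chain rule) composes correctly in positive characteristic and is exactly what converts the multi-index vanishing hypothesis on $Q$ into a univariate multiplicity bound for $R_{s_0}$. A related point that is easy to miss is that the hypothesis must be read as a formal polynomial identity in $s$ rather than as pointwise vanishing on $\F_q^{n-1}$: without this strengthening, one would only access finitely many curves $\gamma_{s_0}$, producing a mere $1$-dimensional subset of $\overline{\F_q}^n$, which is far from Zariski-dense.
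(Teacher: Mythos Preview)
Your argument is correct and takes a genuinely different route from the paper. The paper proceeds combinatorially: it fixes the lexicographically smallest exponent $e$ appearing in $f$, partitions the exponents of $Q$ into blocks $I_{\j}$ according to the value of $\alpha'+\alpha_n e$, and then isolates the coefficient in $Q_{\beta,\rho}(s)$ of the lexicographically smallest surviving monomial. That coefficient takes the shape $\sum_{\alpha\in I_{\j}} b^{\alpha_n-\beta_n} c_\alpha \binom{\alpha}{\beta} \rho^{|\alpha|-|\beta|}$, and a dedicated key lemma (Lemma~\ref{warmuplemma2}, proved via Vandermonde's identity and univariate Schwartz--Zippel with multiplicity over $\F_q\setminus\{0\}$) then forces the corresponding $c_\alpha$ to vanish, contradicting minimality of $\j$. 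The cases $n=2$ and $n>2$ are handled separately, and the $n>2$ case requires nontrivial lex-order bookkeeping.

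Your approach is geometric instead: you pass to $\overline{\F_q}$, exploit that the hypothesis is a \emph{formal} identity in $s$ to specialize at arbitrary $s_0\in\overline{\F_q}^{\,n-1}$, restrict $Q$ to the lines $t\mapsto(ts_0,tf(s_0))$, and apply the multiplicity-under-composition inequality (this is exactly Lemma~\ref{multcomp} in the paper) together with a one-variable root count to kill each restriction; Zariski density of the union of these lines then finishes uniformly in $n$. What you gain is a short, conceptual proof that avoids the lexicographic combinatorics and the auxiliary Lemma~\ref{warmuplemma2} entirely. What the paper's approach buys is that everything stays inside $\F_q$, with no appeal to the algebraic closure or to Zariski density---closer in spirit to the explicitly finite polynomial-method arguments elsewhere in the paper. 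Both proofs ultimately hinge on Schwartz--Zippel with multiplicities, just invoked at different places.
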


Before proving these statements, we give some preliminary results in sections 2 and 3, and as a warm up, consider a special case in section~\ref{secdim2}. 
In section~\ref{secthm}, we prove Theorem~\ref{thm} assuming Proposition~\ref{prop}, and finally in section~\ref{secprop}, we prove Proposition~\ref{prop}.

\section{Preliminaries}
Consider $\F_q[x_1,\dots,x_n]$, the space of $n$-variate polynomials with coefficients in $\F_q$. 
We will let $x=(x_1,\dots,x_n)$ throughout the remainder of the paper. 
For $\alpha=(\alpha_1,\dots,\alpha_n)\in\Z_{\geq0}^n$, we define
\[ x^\alpha = x_1^{\alpha_1}\cdots x_n^{\alpha_n}. \]
We also define $|\alpha|:=\alpha_1+\dots+\alpha_n$. Then $x^\alpha$ is a monomial of degree $|\alpha|$. 

We say that a polynomial $P(x)=\sum_\alpha c_\alpha x^\alpha$ is \textit{non-zero }if there is some $c_\alpha$ that is non-zero, and otherwise, call $P$ the \emph{zero polynomial}. If $P$ is non-zero,  the degree of $P$, denoted $\text{deg}(P)$, is $\max\{|\alpha|:c_\alpha\neq0\}$.
For convenience, we define the degree of the zero polynomial to be $-\infty$.  Notice that it's possible for a non-zero polynomial to vanish on all of $\F_q^n$; one such example is $P(x)=x_1^q-x_1$.

Dvir's original proof of the finite field Kakeya conjecture starts off by assuming that a Kakeya set $S$ is small enough so that there must be a non-zero polynomial of degree at most $q-1$ vanishing on it. Then, using the structure of the Kakeya set, he derives a contradiction, concluding that $|S|\geq c_n q^n$ for an explicit constant $c_n$. 

In~\cite{methodmult}, Dvir, Kopparty, Saraf, and Sudan improve this bound (in particular, replace the constant $c_n$ with $c^n$, where $c$ is independent of $n$) using the \textit{method of multiplicities.} In their argument, given a Kakeya set $S$, they consider polynomials vanishing on $S$ with high multiplicity. We describe the tools needed for this method in the following subsection. 

\subsection{Hasse Derivatives and the Method of Multiplicities}

\begin{definition}
\label{def-Hasse}
Let $x=(x_1,\dots,x_n)$, $y=(y_1,\dots,y_n)$, and $P\in \F_q[x]$. Let $\beta\in \Z_{\geq0}^n$. The \emph{$\beta$-th Hasse derivative of $P$}, which we denote by $P^{(\beta)}$, is the coefficient of $y^\beta$ in $P(x+y)$. Thus we may write
\[ P(x+y)=\sum_\beta P^{(\beta)}(x)y^\beta. \]
Given a point $a\in \F_q^n$, the \emph{multiplicity of $P$ at $a$} is 
\[ \text{mult}(P,a)=\max\{M\in\Z_{\geq0}:P^{(\beta)}(a)=0 \text{ for all } \beta\in\Z_{\geq0}^n \text{ with } |\beta|<M\}, \]
with the convention that if the maximum of this set does not exist, then $\text{mult}(P,a)=\infty$.  Moreover, for $A\subset\F_q^n$, we say that the polynomial $P$ \emph{vanishes on $A$ with multiplicity $M$} if $\text{mult}(P,a)\geq M$ for all $a\in A$. 
\end{definition}

In~\cite{methodmult}, Dvir et al prove the following properties of and results relating to multiplicities of polynomials. We will also use these results in our arguments. 

\begin{lemma}[\cite{methodmult}, Lemma 5]
\label{multHD}
Let $P\in \F_q[x_1,\dots,x_n]$ and $\beta\in\Z_{\geq0}^n$. Then for $a\in \F_q^n$, 
\[ \text{mult}(P^{(\beta)},a)\geq \text{mult}(P,a)-|\beta|. \]
\end{lemma}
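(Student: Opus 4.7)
The plan is to reduce the lemma to a single algebraic identity relating iterated Hasse derivatives to Hasse derivatives of higher order, namely
\[ (P^{(\beta)})^{(\gamma)}(x) = \binom{\beta+\gamma}{\beta} P^{(\beta+\gamma)}(x), \]
where $\binom{\beta+\gamma}{\beta} := \prod_{i=1}^n \binom{\beta_i+\gamma_i}{\beta_i}$. Once this is in hand, the lemma follows quickly. Let $M = \text{mult}(P,a)$; if $M \leq |\beta|$ then $M-|\beta|$ is nonpositive and the conclusion is trivial, so assume $M > |\beta|$. For any $\gamma$ with $|\gamma| < M - |\beta|$, we have $|\beta+\gamma| < M$, so by definition of multiplicity $P^{(\beta+\gamma)}(a) = 0$, and the identity gives $(P^{(\beta)})^{(\gamma)}(a) = 0$. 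Hence $\text{mult}(P^{(\beta)},a) \geq M - |\beta|$, and the case $M = \infty$ is handled identically.

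To derive the identity, I would introduce a fresh set of variables $z=(z_1,\dots,z_n)$ and expand $P(x+y+z)$ in two different ways. First, treating $x+y$ as the basepoint and $z$ as the perturbation, and then iterating with $x$ as basepoint and $y$ as perturbation,
\[ P(x+y+z) = \sum_\gamma P^{(\gamma)}(x+y)\, z^\gamma = \sum_\beta \sum_\gamma (P^{(\gamma)})^{(\beta)}(x)\, y^\beta z^\gamma. \]
Second, treating $y+z$ as a single perturbation and using the multinomial expansion $(y+z)^\alpha = \sum_{\beta \leq \alpha} \binom{\alpha}{\beta} y^\beta z^{\alpha - \beta}$, then reindexing by $\alpha = \beta + \gamma$,
\[ P(x+y+z) = \sum_\alpha P^{(\alpha)}(x)\,(y+z)^\alpha = \sum_\beta \sum_\gamma \binom{\beta+\gamma}{\beta} P^{(\beta+\gamma)}(x)\, y^\beta z^\gamma. \]
Comparing coefficients of $y^\beta z^\gamma$ in these two expansions, viewed as polynomial identities in the $2n$ variables $y$ and $z$, yields the required formula.

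The proof is essentially bookkeeping once this identity is established, so the only real obstacle is the double-expansion step. One small subtlety worth flagging is that the binomial coefficient $\binom{\beta+\gamma}{\beta}$ may vanish in $\F_q$ by Lucas' theorem, but this never causes a problem here: the identity is used only to deduce that $(P^{(\beta)})^{(\gamma)}(a) = 0$ from $P^{(\beta+\gamma)}(a) = 0$, and multiplication by any scalar preserves zero.
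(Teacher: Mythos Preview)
The paper does not supply its own proof of this lemma; it is simply quoted from \cite{methodmult}. Your argument is correct and is in fact the standard one (and essentially the one given in the cited reference): establish the iterated-derivative identity by a double expansion of $P(x+y+z)$, then read off the multiplicity bound.

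One cosmetic remark: as written, your first expansion produces $(P^{(\gamma)})^{(\beta)}(x)$ as the coefficient of $y^\beta z^\gamma$, not $(P^{(\beta)})^{(\gamma)}(x)$. Since $\beta$ and $\gamma$ are dummy indices and $\binom{\beta+\gamma}{\beta}=\binom{\beta+\gamma}{\gamma}$, swapping them recovers the stated identity immediately, so there is no actual gap.
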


\begin{lemma}[\cite{methodmult}, Proposition 6]
\label{multcomp}
Let $P\in \F_q[x_1,\dots,x_n]$ and $h\in (\F_q[t])^n$ (where $t$ is a single variable). Then for $\lambda\in\F_q$,
\[ \text{mult}(P\circ h, \lambda) \geq \text{mult}(P, h(\lambda)). \]
\end{lemma}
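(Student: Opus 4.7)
The plan is to convert the parametric hypothesis into a divisibility statement for $Q$ and then use the degree bound $d<k(q-1)$ to conclude $Q=0$. Writing $x_\perp=(x_1,\dots,x_{n-1})$, I first substitute $s_i=x_i/\rho$ (legal because $\rho\in\F_q^*$ is a non-zero scalar) in the identity $Q_\beta(\rho s,\rho f(s))=0$ and use that $f$ is homogeneous of degree $\ell$, so $\rho f(x_\perp/\rho)=\rho^{1-\ell}f(x_\perp)$. This yields
\[ Q_\beta\bigl(x_\perp,\,\rho^{1-\ell}f(x_\perp)\bigr)=0 \qquad\text{in } \F_q[x_\perp] \]
for every $\rho\in\F_q^*$ and every $\beta$ with $|\beta|<k$. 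As $\rho$ varies over $\F_q^*$, the scalar $c:=\rho^{1-\ell}$ runs through the subgroup $H\subset\F_q^*$ of $(\ell-1)$-th powers, which has order $|H|=(q-1)/g$ where $g:=\gcd(\ell-1,q-1)$.

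For each $c\in H$ define $W_c(x):=x_n-cf(x_\perp)$. The key claim is that $W_c^k$ divides $Q$; I prove this by induction on $j$ from $0$ to $k-1$, showing $W_c^{j+1}\mid Q$ given $W_c^j\mid Q$. Write $Q=W_c^jQ_j$ and apply the Leibniz rule for Hasse derivatives in the single direction $x_n$:
\[ Q^{(0,\dots,0,j)}=\sum_{a=0}^{j}\binom{j}{a}\,W_c^{j-a}\,(Q_j)^{(0,\dots,0,j-a)}. \]
Evaluating at $x_n=cf(x_\perp)$ kills every term with $a<j$ (since $W_c$ does) and leaves $Q_j(x_\perp,cf(x_\perp))$ on the right. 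Since $j<k$, the identity from the previous paragraph (applied with $\beta=(0,\dots,0,j)$) forces the left-hand side to vanish, so $Q_j(x_\perp,cf(x_\perp))=0$. Polynomial long division of $Q_j$ by the $x_n$-monic polynomial $W_c$ now gives $W_c\mid Q_j$, completing the inductive step.

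Next, the $W_c$ for distinct $c\in H$ are pairwise coprime in $\F_q[x]$: any common factor of $W_c$ and $W_{c'}$ divides their difference $W_c-W_{c'}=(c'-c)f(x_\perp)$, hence (since $c'-c\in\F_q^*$) divides $f$; but $\gcd(W_c,f)=\gcd(x_n-cf,f)=\gcd(x_n,f)=1$ because $f$ is a polynomial in $x_\perp$ alone. Consequently $W^k\mid Q$, where
\[ W:=\prod_{c\in H}\bigl(x_n-cf(x_\perp)\bigr)=x_n^{|H|}-f(x_\perp)^{|H|}, \]
using the standard identity $\prod_{c\in H}(T-c)=T^{|H|}-1$ for the subgroup $H$. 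Finally $\deg W=\ell|H|=\ell(q-1)/g$, and $\ell\geq 2$ forces $g\leq\ell-1<\ell$, so $\deg W>q-1$ and hence $\deg W^k>k(q-1)>d=\deg Q$. This contradicts $W^k\mid Q$ unless $Q=0$, which is the desired conclusion.

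The delicate point is the inductive step in the second paragraph: the Leibniz rule for Hasse derivatives, specialized to the single direction $x_n$, lets the evaluation $x_n=cf(x_\perp)$ peel off exactly one additional factor of $W_c$ per step while keeping the order of Hasse derivative within the allowed range $|\beta|<k$. The remaining bookkeeping (the subgroup structure of $H$, the pairwise coprimality of the $W_c$, and the final degree comparison) is routine once this multiplicity-$k$ divisibility is in hand.
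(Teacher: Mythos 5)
Your write-up does not address the statement you were asked to prove. Lemma~\ref{multcomp} is a claim about multiplicities under composition: for $P\in\F_q[x_1,\dots,x_n]$, a tuple $h\in(\F_q[t])^n$ of univariate polynomials, and $\lambda\in\F_q$, one has $\text{mult}(P\circ h,\lambda)\geq\text{mult}(P,h(\lambda))$. (In the paper this is quoted from Dvir, Kopparty, Saraf and Sudan and not reproved.) Nowhere in your argument do $P$, $h$, $\lambda$, or the multiplicity of a composite at a point appear. What you have actually written is a proof of Proposition~\ref{prop}: you take the hypothesis that every $Q_{\beta,\rho}$ with $|\beta|<k$ and $\rho\neq0$ vanishes identically, convert it into the divisibility $(x_n-cf(x_1,\dots,x_{n-1}))^k\mid Q$ for each $c$ in the group of $(\ell-1)$-st powers, and conclude $Q=0$ by a degree count. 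That is a different theorem from a different part of the paper; as a proof of Lemma~\ref{multcomp} the proposal is vacuous.

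A correct proof of Lemma~\ref{multcomp} runs through an entirely different mechanism: with $m=\text{mult}(P,h(\lambda))$, expand $(P\circ h)(\lambda+\mu)=P\bigl(h(\lambda)+(h(\lambda+\mu)-h(\lambda))\bigr)=\sum_\beta P^{(\beta)}(h(\lambda))\,\bigl(h(\lambda+\mu)-h(\lambda)\bigr)^\beta$, observe that each coordinate of $h(\lambda+\mu)-h(\lambda)$ is divisible by $\mu$, so the $\beta$-term contributes only to powers $\mu^i$ with $i\geq|\beta|$; hence the coefficient of $\mu^i$, which is $(P\circ h)^{(i)}(\lambda)$, vanishes for all $i<m$. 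For what it is worth, read on its own terms your argument for Proposition~\ref{prop} appears sound and is genuinely different from the paper's lexicographic-order argument in Section~\ref{secprop} (you would need to supply the Leibniz rule for Hasse derivatives, which the paper never states, and note that $f\neq0$), but it cannot be credited as a proof of the lemma in question.
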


\begin{lemma}[\cite{methodmult}, Proposition 4]
\label{lem-sum}
Let $P,Q\in\F_q[x_1,\dots,x_n]$ and $\beta\in\Z_{\geq0}^n$. Then $(P+Q)^{(\beta)}=P^{(\beta)}+Q^{(\beta)}$. 
\end{lemma}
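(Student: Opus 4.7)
The plan is to derive the identity directly from the defining expansion $P(x+y) = \sum_\beta P^{(\beta)}(x)\, y^\beta$, exploiting the fact that polynomial addition in $\F_q[x_1,\dots,x_n]$ is coefficient-wise and that evaluation at a formal shift is a ring homomorphism.

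First I would introduce formal auxiliary variables $y=(y_1,\dots,y_n)$ and work inside the polynomial ring $\F_q[x_1,\dots,x_n,y_1,\dots,y_n] = \F_q[x][y]$. The substitution $z_i \mapsto x_i+y_i$ defines an $\F_q$-algebra homomorphism from $\F_q[z_1,\dots,z_n]$ into $\F_q[x][y]$, and in particular it is additive. Applying this to $P+Q$ gives
\[ (P+Q)(x+y) \;=\; P(x+y) + Q(x+y). \]
Next, I would expand both sides using the definition of the Hasse derivative. The left side equals $\sum_\beta (P+Q)^{(\beta)}(x)\, y^\beta$, while the right side equals
\[ \sum_\beta P^{(\beta)}(x)\, y^\beta + \sum_\beta Q^{(\beta)}(x)\, y^\beta \;=\; \sum_\beta \bigl(P^{(\beta)}(x) + Q^{(\beta)}(x)\bigr)\, y^\beta. \]

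The final step is to compare coefficients. Viewing both sides as elements of $\F_q[x][y]$, the monomials $\{y^\beta : \beta \in \Z_{\geq 0}^n\}$ form a basis over the coefficient ring $\F_q[x]$, so equating coefficients of $y^\beta$ yields the desired identity $(P+Q)^{(\beta)} = P^{(\beta)} + Q^{(\beta)}$ for every $\beta \in \Z_{\geq 0}^n$.

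There is essentially no obstacle here; the statement is a formal consequence of the definition. The only point requiring any care is to treat $P$ and $Q$ as formal polynomials rather than polynomial functions on $\F_q^n$, so that coefficient comparison is valid over a finite field. This is already the convention adopted in the preliminaries of the paper, so the argument goes through cleanly.
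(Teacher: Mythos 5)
Your proof is correct; the paper itself states this lemma without proof (citing \cite{methodmult}), and your argument --- additivity of the substitution $x\mapsto x+y$ followed by comparison of coefficients of $y^\beta$ in $\F_q[x][y]$ --- is exactly the standard derivation from Definition~\ref{def-Hasse}.
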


The following lemma describes, for a particular degree $D$ and multiplicity $M$, how small a set $A$ must be to guarantee that there exists a polynomial of degree at most $D$ vanishing on $A$ with multiplicity $M$.

\begin{lemma}[\cite{methodmult}, Lemma 8]
\label{exists}
Let \(A\subset \F_q^n\). If 
\[ {M+n-1\choose n}\cdot|A|<{D+n\choose n} \]
then there exists a non-zero polynomial \(P\in \F_q[x_1,\dots,x_n]\) of degree at most $D$ vanishing on~\(A\) with multiplicity $M$. 

\end{lemma}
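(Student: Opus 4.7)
The plan is to show that $Q$ is divisible, in $\F_q[x_1,\dots,x_n]$, by a polynomial of degree strictly greater than $d=\deg Q$, which forces $Q\equiv 0$. Only the Hasse derivative hypotheses in the pure $x_n$-direction will be needed, i.e., with $\beta = je_n$ for $j=0,1,\dots,k-1$; these encode that $Q$, viewed as a polynomial in $x_n$ over $\F_q[y]$ with $y := (x_1,\dots,x_{n-1})$, has $x_n = \tau f(y)$ as a root of multiplicity at least $k$ for many values of $\tau$.

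To make this precise, for each $j < k$ and each $\rho\in\F_q^*$, the hypothesis gives $Q^{(je_n)}(\rho s,\rho f(s)) = 0$ identically in $s$. After substituting $s = \rho^{-1} y$ and using the homogeneity of $f$ to simplify $\rho f(\rho^{-1}y) = \rho^{1-\ell} f(y)$, this reads
\[ Q^{(je_n)}(y,\rho^{1-\ell} f(y)) = 0 \quad \text{as a polynomial in } y. \]
As $\rho$ ranges over $\F_q^*$, $\tau := \rho^{1-\ell}$ ranges over the subgroup $\mu_r \leq \F_q^*$ of order $r := (q-1)/\gcd(\ell-1,q-1)$. Hence, for every $\tau\in\mu_r$ and every $j<k$, $Q^{(je_n)}(y,\tau f(y)) = 0$ in $\F_q[y]$.

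Viewing $Q$ as an element of $\F_q(y)[x_n]$, the vanishing of the first $k$ Hasse derivatives in the single variable $x_n$ at $x_n = \tau f(y)$ is the standard criterion for $(x_n - \tau f(y))^k$ to divide $Q$; since this factor is monic in $x_n$, the divisibility descends to $\F_q[y,x_n]$. For distinct $\tau \in \mu_r$ the linear polynomials $x_n - \tau f(y)$ are coprime irreducibles in the UFD $\F_q[y, x_n]$, so their $k$-th powers are pairwise coprime, and the identity $\prod_{\tau\in\mu_r}(x_n - \tau f(y)) = x_n^r - f(y)^r$ yields
\[ (x_n^r - f(y)^r)^k \ \text{divides}\ Q. \]

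It remains to compare degrees. Since $\ell\geq 2$, the divisor $x_n^r - f(y)^r$ has degree $\ell r$, so $(x_n^r - f(y)^r)^k$ has degree $k\ell r$. Because $\gcd(\ell-1,q-1)$ divides $\ell-1$, one has $\gcd(\ell-1,q-1) \leq \ell - 1 < \ell$, whence $\ell r > q-1$ and $k\ell r > k(q-1) > d$ by the hypothesis on $k$. A polynomial of degree $d$ divisible by one of strictly larger degree must be zero, so $Q\equiv 0$. The essential technical step is identifying the orbit $\{\rho^{1-\ell}:\rho\in\F_q^*\}$ with $\mu_r$ so that $\prod (x_n-\tau f(y))$ collapses to $x_n^r - f(y)^r$; everything else is the degree comparison forced by $d<k(q-1)$.
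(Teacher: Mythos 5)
You have proved the wrong statement. Lemma~\ref{exists} is a pure existence result: given \emph{any} set $A\subset\F_q^n$ satisfying the cardinality inequality, one must exhibit a non-zero polynomial of degree at most $D$ vanishing on $A$ with multiplicity $M$. What you have written is instead a proof attempt for Proposition~\ref{prop}, the ``homogeneous version'' treated in Section~\ref{secprop}: it starts from a polynomial $Q$ whose Hasse derivatives vanish along the surfaces $\rho(s,f(s))$ and concludes $Q\equiv 0$. Nothing in your write-up mentions the set $A$, the inequality ${M+n-1\choose n}\cdot|A|<{D+n\choose n}$, or the construction of any polynomial, so the lemma in question is not addressed at all.

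The argument Lemma~\ref{exists} actually requires (the paper imports it from \cite{methodmult} without reproving it) is dimension counting. The $\F_q$-vector space of polynomials in $n$ variables of degree at most $D$ has dimension ${D+n\choose n}$. Requiring $\text{mult}(P,a)\geq M$ at a single point $a$ means $P^{(\beta)}(a)=0$ for all $|\beta|<M$, and by Lemma~\ref{expHasse} each such condition is a homogeneous linear equation in the coefficients of $P$; there are exactly ${M+n-1\choose n}$ multi-indices $\beta$ with $|\beta|<M$. Imposing these conditions for every $a\in A$ gives at most ${M+n-1\choose n}\cdot|A|$ linear constraints, which by hypothesis is strictly fewer than the dimension of the space, so a non-zero solution $P$ exists. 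As an aside: read as a proof of Proposition~\ref{prop}, your divisibility argument --- using only the derivatives $\beta=je_n$, identifying $\{\rho^{1-\ell}:\rho\in\F_q^*\}$ with the subgroup $\mu_r$, and extracting the factor $(x_n^r-f(y)^r)^k$ of degree $k\ell r>k(q-1)>d$ --- is a genuinely different route from the paper's lexicographic-ordering argument and looks viable, but it belongs to Section~\ref{secprop}, not here.
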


\subsection{Schwartz-Zippel Lemma and a key application} The next lemma is a strengthened version of the Schwartz-Zippel Lemma.

\begin{lemma}[\cite{methodmult}, Proposition 10]
\label{SZ}
Let \(A\subset \F_q\). Let \(P\in \F_q[x_1,\dots,x_n]\) be a non-zero polynomial of degree \(d\). Then
\[ \sum_{a\in A^n}\emph{mult}(P,a)\leq d|A|^{n-1}. \]
Consequently, if $P$ is a polynomial vanishing on $\F_q^n$ with multiplicity $M$, and $\text{deg}(P)<Mq$, then $P$ is the zero polynomial.
\end{lemma}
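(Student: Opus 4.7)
The plan is to introduce the two-variable polynomial
\[ R(\rho,s) := Q\bigl(\rho s_1,\dots,\rho s_{n-1},\rho f(s)\bigr) \in \F_q[\rho,s_1,\dots,s_{n-1}], \]
first show that $R$ is identically zero, and then use the weighted-homogeneity hypothesis on $Q$ to deduce that $Q$ itself vanishes.

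To show $R\equiv 0$, I would fix an arbitrary non-zero $\rho_0\in\F_q$ and Taylor-expand $R$ in $\rho$ about $\rho_0$ using the Hasse expansion of $Q$. Taking $a=\rho_0(s,f(s))$ and $y=(\rho-\rho_0)(s,f(s))$ in $Q(a+y)=\sum_\beta Q^{(\beta)}(a)y^\beta$, and observing that $y^\beta = (\rho-\rho_0)^{|\beta|} s^{\bar\beta} f(s)^{\beta_n}$ with $\bar\beta=(\beta_1,\dots,\beta_{n-1})$, one obtains
\[ R(\rho,s) \;=\; \sum_{M\geq 0}(\rho-\rho_0)^M \sum_{|\beta|=M} Q_{\beta,\rho_0}(s)\,s^{\bar\beta} f(s)^{\beta_n}. \]
The hypothesis is precisely that $Q_{\beta,\rho_0}\equiv 0$ for every $|\beta|<k$, so every contribution with $M<k$ drops out and $(\rho-\rho_0)^k$ divides $R$ in the UFD $\F_q[s_1,\dots,s_{n-1}][\rho]$. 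Running $\rho_0$ over $\F_q\setminus\{0\}$ and using that the resulting linear factors in $\rho$ are pairwise coprime yields
\[ \prod_{\rho_0\in\F_q\setminus\{0\}}(\rho-\rho_0)^k \;\;\Big|\;\; R(\rho,s). \]
This divisor has $\rho$-degree $k(q-1)$, while $\deg_\rho R\leq \deg Q = d<k(q-1)$, forcing $R\equiv 0$.

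For the second step, I would expand $R$ directly: each monomial $c_\alpha x^\alpha$ of $Q$ contributes $c_\alpha\rho^{|\alpha|}s^{\bar\alpha}f(s)^{\alpha_n}$ to $R$. The weighted-homogeneity condition $\alpha_1+\dots+\alpha_{n-1}+\ell\alpha_n=m$ together with $\ell\geq 2$ forces $|\alpha|=m-(\ell-1)\alpha_n$, so distinct values of $\alpha_n$ give distinct exponents of $\rho$. Grouping terms with $\alpha_n=j$ writes
\[ R(\rho,s) \;=\; \sum_j \rho^{m-(\ell-1)j} f(s)^j\, h_j(s), \qquad h_j(s):=\sum_{\alpha:\,\alpha_n=j} c_\alpha s^{\bar\alpha}, \]
and since the exponents of $\rho$ here are pairwise distinct, $R\equiv 0$ forces $f(s)^j h_j(s)=0$ in $\F_q[s_1,\dots,s_{n-1}]$ for every $j$. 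Because $f$ is a nonzero homogeneous polynomial of degree $\ell$ and $\F_q[s_1,\dots,s_{n-1}]$ is an integral domain, each $h_j$ vanishes, hence $Q=\sum_j x_n^j h_j\equiv 0$.

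The main hurdle is the first step: promoting the Hasse-derivative vanishing along the scaled surfaces $\rho_0(s,f(s))$ to divisibility of $R$ by $(\rho-\rho_0)^k$ as a polynomial in the single variable $\rho$ over the ring $\F_q[s_1,\dots,s_{n-1}]$. Once this one-variable picture is in place the degree count $d<k(q-1)$ closes the argument, and the condition $\ell\geq 2$ is exactly what separates the resulting polynomial identity back into the individual slices $h_j$ of $Q$.
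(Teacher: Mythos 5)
The fundamental problem is that your proposal does not address the statement at hand. The statement is Lemma~\ref{SZ}, the multiplicity-enhanced Schwartz--Zippel bound: for an arbitrary non-zero $P\in\F_q[x_1,\dots,x_n]$ of degree $d$ and any $A\subset\F_q$, one must show $\sum_{a\in A^n}\text{mult}(P,a)\le d|A|^{n-1}$, and deduce that a polynomial of degree less than $Mq$ vanishing on all of $\F_q^n$ with multiplicity $M$ is the zero polynomial. Nothing in your write-up engages with this: there is no grid $A^n$, no count of multiplicities, and no induction on the number of variables, which is how the bound is actually proved in the cited reference (write $P=\sum_j P_j(x_1,\dots,x_{n-1})x_n^j$, isolate the top non-zero $P_j$, and combine the one-variable divisibility bound $\sum_{a\in A}\text{mult}(g,a)\le\deg g$ with the inductive hypothesis and Lemma~\ref{multHD}). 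In the paper this lemma is imported from \cite{methodmult} rather than reproved, but a blind proof of it would have to supply an argument of that shape; yours does not.

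What you have written is instead a proof of Proposition~\ref{prop}. You take the weighted-homogeneous $Q$, form $R(\rho,s)=Q(\rho(s,f(s)))$, show $(\rho-\rho_0)^k$ divides $R$ for each $\rho_0\ne0$ via the Hasse expansion, conclude $R\equiv0$ from the degree bound $d<k(q-1)$, and then separate the slices $h_j$ using $\ell\ge2$ and the fact that $\F_q[s_1,\dots,s_{n-1}]$ is a domain. As far as it goes that argument looks sound, and it is genuinely different from the paper's proof of Proposition~\ref{prop}, which works with the lexicographically minimal exponent of $f$ and reduces to Lemma~\ref{warmuplemma2}; notably your route replaces the Schwartz--Zippel count by a divisibility argument in $\F_q[s_1,\dots,s_{n-1}][\rho]$. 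But it is not a proof of Lemma~\ref{SZ}, and no rearrangement of it becomes one: you would need to start over with the inductive Schwartz--Zippel argument sketched above.
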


We will also need the following lemma, which is closely related to a statement established in the proof of Theorem~11 in~\cite{methodmult}.

\begin{lemma}
\label{derivszero}
Let $\ell\in \N$ with $2\leq \ell<q$, and let $g\in\F_q[x_1,\dots,x_{n-1}]$ be a polynomial of degree $\ell$. For $\rho\in\F_q$ and $a\in \F_q^n$, let 
\[C=\{a+\rho(\lambda,g(\lambda)):\lambda\in\F_q^{n-1}\}. \]
Let $k,D,M\in\N$ be such that
\begin{equation}
\label{ineqMDgen}
\ell(D-w)<(M-w)q  \quad \text{for all} \quad 0 \leq w< k.
\end{equation} Suppose that $P$ is a non-zero polynomial of degree at most $D$ vanishing on $C$ with multiplicity $M$. Let $P^{(\beta)}$ denote the Hasse derivative of $P$ of order $\beta\in\Z_{\geq0}^n$, and let 
\[ P_{\beta,\rho}(s)=P^{(\beta)}(a+\rho(s,g(s))).\]
Then for $|\beta|<k$, $P_{\beta,\rho}$ is the zero polynomial.
\end{lemma}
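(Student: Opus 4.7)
The plan is to prove, for each fixed multi-index $\beta$ with $|\beta| < k$, that $P_{\beta,\rho}$ is zero directly, without induction: the hypothesis $\ell(D-w) < (M-w)q$ is available for every $0 \leq w < k$ separately, so a single application of the consequence of Lemma~\ref{SZ} handles each $\beta$. Fix such a $\beta$ and set $w = |\beta|$.

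First I would collect two facts about $P^{(\beta)}$. From the defining expansion $P(x+y) = \sum_\gamma P^{(\gamma)}(x) y^\gamma$ together with $\deg P \leq D$, one reads off $\deg P^{(\beta)} \leq D - w$. Moreover, since $P$ vanishes on $C$ with multiplicity $M$, Lemma~\ref{multHD} gives $\text{mult}(P^{(\beta)}, a') \geq M - w$ for every $a' \in C$. Now I would parameterize $C$ by the polynomial map $h : \F_q^{n-1} \to \F_q^n$, $h(s) = a + \rho(s, g(s))$, so that $P_{\beta,\rho}(s) = P^{(\beta)}(h(s))$. Because the first $n-1$ components of $h$ have degree $1$ in $s$ and the last has degree $\ell$, a monomial $x^\alpha$ in $P^{(\beta)}$ pulls back to a polynomial of degree $\alpha_1 + \cdots + \alpha_{n-1} + \ell \alpha_n \leq \ell|\alpha|$, so $\deg P_{\beta,\rho} \leq \ell(D-w)$.

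The delicate step is to transfer the high multiplicity from $C$ to $\F_q^{n-1}$: for each $\lambda \in \F_q^{n-1}$ I want $\text{mult}(P_{\beta,\rho}, \lambda) \geq \text{mult}(P^{(\beta)}, h(\lambda)) \geq M - w$. Since Lemma~\ref{multcomp} is stated only for single-variable composition, I would supply the (standard) multivariate extension via a direct Taylor argument: expand $P^{(\beta)}(x) = \sum_\gamma (P^{(\beta)})^{(\gamma)}(h(\lambda))\,(x - h(\lambda))^\gamma$ so that, by the multiplicity bound just recorded, all terms with $|\gamma| < M - w$ vanish; then substitute $x = h(s)$ and rewrite each factor $h_i(s) - h_i(\lambda)$ in $u = s - \lambda$, noting that it has no constant term, so every surviving summand contributes order $\geq M - w$ at $\lambda$. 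Equivalently, one may restrict $P_{\beta,\rho}$ to each line $\lambda + tv$ and invoke the stated Lemma~\ref{multcomp} in a single variable.

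Combining these bounds, $P_{\beta,\rho}$ has degree at most $\ell(D-w)$ and vanishes on $\F_q^{n-1}$ with multiplicity at least $M - w$, while the hypothesis $\ell(D-w) < (M-w)q$ puts us in position to apply the consequence of Lemma~\ref{SZ}, forcing $P_{\beta,\rho}$ to be the zero polynomial. The main obstacle is the multivariate extension of the composition--multiplicity lemma; once that is in place, the proof reduces to clean bookkeeping of degrees and multiplicities that is precisely calibrated to the condition $\ell(D-w) < (M-w)q$.
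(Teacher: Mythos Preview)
Your proof is correct and follows essentially the same path as the paper's: bound $\deg P^{(\beta)}\le D-|\beta|$, use Lemma~\ref{multHD} to get multiplicity $\ge M-|\beta|$ on $C$, pull back via $h(s)=a+\rho(s,g(s))$ to get a polynomial of degree $\le\ell(D-|\beta|)$ vanishing on $\F_q^{n-1}$ with multiplicity $\ge M-|\beta|$, and conclude via Lemma~\ref{SZ}. You are in fact more careful than the paper in one respect: Lemma~\ref{multcomp} is stated here only for univariate $h$, and the paper applies it without comment in the multivariate setting, whereas you explicitly supply the (easy) multivariate extension; this is a non-issue since the cited source \cite{methodmult} proves the general case, but your attention to it is warranted.
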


\begin{proof}

Choose some $\beta\in \Z_{\geq0}^n$ with $|\beta|<k$. Let $P_\beta=P^{(\beta)}$. Notice that $P_\beta$ has degree at most $D-|\beta|$. By Lemma~\ref{multHD}, $P_\beta$ vanishes on $C$ with multiplicity $M-|\beta|$. Applying this with Lemma~\ref{multcomp}, we see that 
\[ P_{\beta,\rho}(s) := P_\beta(a+\rho(s,g(s))) \]
vanishes on $\F_q$ with multiplicity $M-|\beta|$. Moreover, $P_{\beta,\rho}$ has degree at most $\ell(D-|\beta|)$. 
Then 
\[ \sum_{\lambda\in\F_q^{n-1}}\text{mult}(P_{\beta,\rho},\lambda)\geq q^{n-1}(M-|\beta|)>q^{n-2}\ell(D-|\beta|)\geq q^{n-2}\text{deg}(P_{\beta,\rho})\]
where the second inequality is given by~(\ref{ineqMDgen}). Thus, by Lemma~\ref{SZ}, $P_{\beta,\rho}$ is the zero polynomial.     
\end{proof}

\subsection{Binomial coefficients and explicit form for Hasse derivatives}

For our arguments, we will need to use the explicit form for a Hasse derivative. First, we define binomial coefficients of multi-indices. For $\alpha,\beta\in \Z_{\geq0}^n$, let
\begin{equation}
\label{bincoeff}
{\alpha \choose \beta}=\prod_{i=1}^n{\alpha_i\choose \beta_i}.
\end{equation}
We use the convention that ${a\choose b}=0$ for $b>a$.  Then 
\begin{equation} 
\label{conv}
{\alpha\choose\beta}=0 \quad \text{if there is some $i$ so that } \alpha_i<\beta_i. 
\end{equation}
We will need the following result for Binomial coefficients, known as Vandermonde's Identity:
\begin{lemma}
\label{lemma:bincoeff}
Let $\alpha\in\Z_{\geq0}^n$ and $w\in\Z_{\geq0}$. Then 
\[ {|\alpha|\choose w}= \sum_{\beta\in\Z_{\geq0}^n,|\beta|=w}{\alpha\choose \beta}, \]
with the convention in~(\ref{conv}).
\end{lemma}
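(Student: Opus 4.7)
The plan is to prove this multi-index Vandermonde identity by a generating-function argument: I will expand $(1+x)^{|\alpha|}$ in two different ways as a polynomial in a single formal variable $x$, and then compare the coefficients of $x^w$ on both sides. Writing $|\alpha|=\alpha_1+\cdots+\alpha_n$, we have the factorization
\[ (1+x)^{|\alpha|} = \prod_{i=1}^n (1+x)^{\alpha_i}.\]
On the left-hand side, the ordinary binomial theorem gives coefficient of $x^w$ equal to $\binom{|\alpha|}{w}$, which is exactly the left-hand side of the identity in the lemma.

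On the right-hand side, I would expand each factor by the binomial theorem to obtain $(1+x)^{\alpha_i}=\sum_{\beta_i\ge0}\binom{\alpha_i}{\beta_i}x^{\beta_i}$ (here the sum is really finite, since $\binom{\alpha_i}{\beta_i}=0$ for $\beta_i>\alpha_i$ by the stated convention). Multiplying these $n$ expansions together, the coefficient of $x^w$ is obtained by summing over all ways to write $w=\beta_1+\cdots+\beta_n$ with $\beta_i\in\Z_{\ge0}$:
\[ [x^w]\prod_{i=1}^n(1+x)^{\alpha_i} \;=\; \sum_{\substack{\beta\in\Z_{\ge0}^n\\|\beta|=w}}\prod_{i=1}^n\binom{\alpha_i}{\beta_i} \;=\; \sum_{\substack{\beta\in\Z_{\ge0}^n\\|\beta|=w}}\binom{\alpha}{\beta},\]
where the last equality is just the definition~(\ref{bincoeff}). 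Equating coefficients of $x^w$ in the two expansions of $(1+x)^{|\alpha|}$ yields the claim.

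There is really no significant obstacle: the identity is a direct consequence of the distributive law, and the convention~(\ref{conv}) is exactly what is needed so that terms with $\beta_i>\alpha_i$ contribute zero on the right, matching the finite expansions on both sides. As an alternative I could argue by induction on $n$, where the base case $n=2$ is the classical Vandermonde identity $\binom{a+b}{w}=\sum_{k=0}^{w}\binom{a}{k}\binom{b}{w-k}$, and the inductive step follows by grouping the last variable: writing $|\alpha|=(\alpha_1+\cdots+\alpha_{n-1})+\alpha_n$ and applying the $n=2$ case, then the induction hypothesis to the first summand. Either approach gives the result in a few lines.
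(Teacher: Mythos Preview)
Your proof is correct. The paper does not actually supply a proof of this lemma: it simply states it as the well-known Vandermonde identity and moves on. Your generating-function argument (and the alternative induction on $n$) are both standard and entirely valid ways to establish it, so there is nothing to compare against.
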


Now we give an explicit form for Hasse derivatives.

\begin{lemma}
\label{expHasse}
Let $P\in\F_q[x_1,\dots,x_n]$ and $x=(x_1,\dots,x_n)$. Suppose that $P$ takes the form 
\[ P(x)=\sum_\alpha c_\alpha x^\alpha. \]
Then for $\beta\in\Z_{\geq0}^n$, we have 
\[ P^{(\beta)}(x)=\sum_\alpha c_\alpha{\alpha\choose\beta}x^{\alpha-\beta} \]
where $\alpha-\beta=(\alpha_1-\beta_1,\dots,\alpha_n-\beta_n)$, and following the convention in~(\ref{conv}).
\end{lemma}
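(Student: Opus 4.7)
The plan is to reduce to the monomial case using linearity of Hasse derivatives, and then apply the multivariate binomial theorem directly. First, I would invoke Lemma~\ref{lem-sum} (applied inductively across the finite sum $P = \sum_\alpha c_\alpha x^\alpha$) together with the easy observation that $(c \cdot P)^{(\beta)} = c \cdot P^{(\beta)}$, which follows immediately from the definition of $P^{(\beta)}$ as a coefficient in $P(x+y)$. This reduces the statement to showing that for a single monomial $P(x) = x^\alpha$, one has $P^{(\beta)}(x) = {\alpha \choose \beta} x^{\alpha-\beta}$.

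For the monomial case, I would write
\[ (x+y)^\alpha = \prod_{i=1}^n (x_i + y_i)^{\alpha_i} \]
and apply the ordinary one-variable binomial theorem to each factor, obtaining
\[ (x+y)^\alpha = \prod_{i=1}^n \sum_{\beta_i=0}^{\alpha_i} {\alpha_i \choose \beta_i} x_i^{\alpha_i-\beta_i} y_i^{\beta_i}. \]
Expanding the product over $i$ groups the terms according to a multi-index $\beta = (\beta_1,\dots,\beta_n)$, and using the definition~(\ref{bincoeff}) of ${\alpha \choose \beta}$ as $\prod_i {\alpha_i \choose \beta_i}$, this becomes $\sum_\beta {\alpha \choose \beta} x^{\alpha-\beta} y^\beta$. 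Reading off the coefficient of $y^\beta$ gives the claimed formula for the single-monomial case.

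Finally, I would note that the convention~(\ref{conv}) that ${\alpha \choose \beta} = 0$ whenever some $\beta_i > \alpha_i$ automatically handles the terms in $\sum_\alpha c_\alpha {\alpha \choose \beta} x^{\alpha-\beta}$ where $\alpha - \beta$ would have a negative component: those terms simply drop out, which is consistent with the fact that they cannot contribute to $(x+y)^\alpha$. There is no real obstacle here; the entire argument is a bookkeeping exercise combining the binomial theorem with additivity of the Hasse derivative, and the only subtlety worth flagging is the convention on binomial coefficients of multi-indices that makes the formula hold uniformly over all $\alpha$ and $\beta$.
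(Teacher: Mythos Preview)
Your proposal is correct and essentially matches the paper's proof. The only cosmetic difference is that you first invoke Lemma~\ref{lem-sum} to reduce to the monomial case and then expand, whereas the paper simply carries the sum $\sum_\alpha c_\alpha$ through the binomial expansion of $(x+y)^\alpha$ in one line; the underlying computation and the handling of the convention~(\ref{conv}) are identical.
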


\begin{proof}
Let $y=(y_1,\dots,y_n)$. Applying the Binomial theorem and the convention that ${a\choose b}=0$ for $b>a$, we have
\[ P(x+y)=\sum_\alpha c_\alpha (x+y)^\alpha =\sum_\alpha c_\alpha \prod_{i=1}^n \sum_{\gamma_i\in\Z_{\geq0}}{\alpha_i \choose \gamma_i}x_i^{\alpha_i-\gamma_i}y_i^{\gamma_i}. \]
Letting $\gamma=(\gamma_1,\dots,\gamma_n)$, we have
\[ P(x+y)=\sum_\alpha c_\alpha\sum_{\gamma}{\alpha\choose \gamma}x^{\alpha-\gamma}y^\gamma. \]
The lemma follows by picking out the coefficient of $y^\beta$, which is by definition $P^{(\beta)}$. 
\end{proof}

\subsection{The lexicographical order}

We will use the lexicographical ordering $\preceq$ on exponents $\alpha,\beta\in\Z_{\geq0}^n$: we say $\alpha\prec\beta$ if $\alpha_j<\beta_j$, and $\alpha_i=\beta_i$ for all $1\leq i<j$, and write $\alpha\preceq\beta$ if $\alpha\prec\beta$ or $\alpha=\beta$. The following statement is well known, although we prove it here for completion.

\begin{proposition}
\label{lexwellord}
The lexicographical ordering $\preceq$ is a well ordering on $\Z_{\geq0}^n$. 
\end{proposition}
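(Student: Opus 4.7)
The plan is to prove this by induction on the dimension $n$. For the base case $n=1$, the relation $\preceq$ coincides with the standard order on $\Z_{\geq 0}$, which is a well ordering by the well-ordering principle for $\N$.

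For the inductive step, I would assume the lexicographical ordering on $\Z_{\geq 0}^{n-1}$ is a well ordering and take an arbitrary non-empty $A \subset \Z_{\geq 0}^n$. First, set $m = \min\{\alpha_1 : \alpha \in A\}$, which exists because the projection of $A$ onto its first coordinate is a non-empty subset of $\Z_{\geq 0}$. Next, let
\[ A' = \{(\alpha_2,\dots,\alpha_n) : \alpha \in A,\ \alpha_1 = m\} \subset \Z_{\geq 0}^{n-1}; \]
this is non-empty, so by the inductive hypothesis it has a least element $(a_2,\dots,a_n)$ under the lex order on $\Z_{\geq 0}^{n-1}$. I claim $\alpha^* := (m, a_2, \dots, a_n)$ is the least element of $A$ under $\preceq$.

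To verify the claim I would take an arbitrary $\beta \in A$. If $\beta_1 > m$, then $\alpha^* \prec \beta$ directly from the definition of $\prec$ with $j=1$. If $\beta_1 = m$, then $(\beta_2,\dots,\beta_n) \in A'$, so either this tuple equals $(a_2,\dots,a_n)$, in which case $\alpha^* = \beta$, or the inductive hypothesis furnishes some $j \geq 2$ with $a_j < \beta_j$ and $a_i = \beta_i$ for $2 \leq i < j$; combined with $\alpha^*_1 = m = \beta_1$, this gives $\alpha^* \prec \beta$. The argument is essentially bookkeeping, and the only subtlety to watch — really the main thing to be careful about, since there is no conceptual obstacle — is aligning the witness index $j$ from the projected $(n-1)$-dimensional problem with the witness index in the ambient $n$-dimensional space, and cleanly handling the equality case so that $\alpha^*$ itself is the minimum rather than a strict lower bound.
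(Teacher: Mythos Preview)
Your proof is correct and follows the same inductive strategy as the paper. The only difference is the direction of the split: the paper projects onto the first $n$ coordinates and then onto the last, whereas you first minimise the first coordinate and then apply the inductive hypothesis to the remaining $n-1$ coordinates of the restricted set $A'$. Your version has the advantage that the restriction $\alpha_1=m$ in the definition of $A'$ guarantees $\alpha^*\in A$ automatically, a point the paper's sketch leaves to the reader; otherwise the two arguments are interchangeable.
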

\begin{proof}
We induct on $n$. The base case $n=1$ is immediate. Now suppose the statement is true for $n\geq1$, and let $S\subset\Z_{\geq0}^{n+1}$ be non-empty. Let 
\[ S_1=\{(a_1,\dots,a_n): (a_1,\dots,a_n,a_{n+1})\in S \text{ for some }a_{n+1}\} \]
and
\[ S_2=\{a_{n+1}: (a_1,\dots,a_n,a_{n+1})\in S \text{ for some }(a_1,\dots,a_n)\}. \]
Then $S_1$ and $S_2$ are non-empty, and by the inductive hypothesis and the 
base case respectively, $S_1$ has a least element $(s_1,\dots,s_n)$, and $S_2$ has a least element $s_{n+1}$. Using the definition of $\preceq$, we may show that $(s_1,\dots,s_n,s_{n+1})$ is the least element of $S$. 

\end{proof}

\begin{lemma}
\label{lem-powerf}
Let $f\in\F_q[x_1,\dots,x_n]$ and suppose that $\alpha\in\Z_{\geq0}^n$ is such that $f=bx^{\alpha}+g$, where $b\neq0$, and every exponent in $g$ is strictly greater than $\alpha$ with respect to $\preceq$. Let $k\in\Z_{\geq0}$. Then there is some polynomial $g_k$ so that 
\[ f^k=b^kx^{k\alpha}+g_k\]
and each exponent in $g_k$ is strictly greater than $k\alpha=(k\alpha_1,\dots,k\alpha_n)$ with respect to $\preceq$. 
\end{lemma}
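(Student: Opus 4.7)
My plan is to prove the lemma by induction on $k$. The base case $k=1$ is exactly the hypothesis on $f$; if one prefers to start at $k=0$, then $f^0 = 1 = b^0 x^{0\cdot\alpha}$ with $g_0 = 0$, and the exponent condition on $g_0$ is vacuous.

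For the inductive step, assume $f^k = b^k x^{k\alpha} + g_k$ with every exponent in $g_k$ strictly $\succ k\alpha$. Writing $f^{k+1} = f\cdot f^k$ and distributing gives
\[ f^{k+1} = (bx^\alpha + g)(b^k x^{k\alpha} + g_k) = b^{k+1}x^{(k+1)\alpha} + bx^\alpha g_k + b^k x^{k\alpha} g + g\,g_k, \]
and I would define $g_{k+1}$ to be the sum of the last three terms. What remains is to verify that every exponent appearing in $g_{k+1}$ is strictly $\succ (k+1)\alpha$.

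The key elementary fact about the lexicographical order is: if $\gamma \succ \delta$ in $\Z_{\geq 0}^n$, then $\gamma+\eta \succ \delta+\eta$ for any $\eta \in \Z_{\geq 0}^n$, because the first coordinate $j$ witnessing $\gamma_j > \delta_j$ still witnesses strict inequality after translation by $\eta$. I would apply this fact separately to each of the three cross terms. In $bx^\alpha g_k$ every exponent has the form $\alpha + \gamma$ with $\gamma \succ k\alpha$, hence $\alpha + \gamma \succ \alpha + k\alpha = (k+1)\alpha$; the term $b^k x^{k\alpha} g$ is symmetric; and for $g\,g_k$, each exponent has the form $\delta + \gamma$ with $\delta \succ \alpha$ and $\gamma \succ k\alpha$, so applying the fact twice and invoking transitivity gives $\delta + \gamma \succ \alpha + \gamma \succ (k+1)\alpha$.

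I do not anticipate a genuine obstacle; the argument is elementary once the translation-invariance of $\prec$ is isolated. The one subtle point worth flagging is that collecting like terms in the expansion can cause coefficient cancellations, but cancellation can only delete monomials, never create new ones with exponents $\preceq (k+1)\alpha$, so the required bound on the exponents of $g_{k+1}$ is preserved.
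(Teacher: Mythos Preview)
Your proof is correct and takes a genuinely different route from the paper. The paper expands $f^k$ in one shot via the multinomial theorem: listing the exponents of $f$ as $\alpha=\alpha^{(0)},\alpha^{(1)},\dots,\alpha^{(m)}$, every exponent of $f^k$ has the form $e(\gamma)=(k-|\gamma|)\alpha+\sum_{j}\gamma_j\alpha^{(j)}$ for some $\gamma\in\Z_{\geq0}^m$ with $|\gamma|\le k$, and the paper then checks coordinate-by-coordinate that $e(\gamma)\succ k\alpha$ whenever $\gamma\neq\vec{0}$, by locating the earliest coordinate at which one of the contributing $\alpha^{(j)}$ first exceeds $\alpha$. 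Your induction, by contrast, isolates the single observation that $\prec$ is translation-invariant (which is exactly the content of the paper's next lemma, Lemma~\ref{lem-xbf}) and feeds it repeatedly into $f^{k+1}=f\cdot f^k$. Your argument is shorter and arguably more transparent; the paper's multinomial route gives a more explicit description of which exponents can occur in $f^k$, but that extra information is not used anywhere downstream. Your remark about cancellation only deleting monomials is a point the paper leaves implicit.
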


\begin{proof}
If $k=0$, then $f^k=1=x^0$, so its smallest exponent is $k\alpha=\vec{0}$. 
Thus we assume $k>0$. Let $\{\alpha^{(0)},\alpha^{(1)},\dots,\alpha^{(m)}\}$ be the exponents appearing in $f$, with $\alpha^{(j)}=(\alpha^{(j)}_1,\dots,\alpha^{(j)}_n)$. Without loss of generality, assume $\alpha=\alpha^{(0)}$. 
From the multinomial theorem, the exponents appearing in $f^k$ are 
\begin{equation} 
\label{expinpow}
e(\gamma):=(k-|\gamma|)\alpha+\gamma_1\alpha^{(1)}+\dots+\gamma_m\alpha^{(m)}
\end{equation}
for $\gamma\in\Z_{\geq0}^m$, $|\gamma|\leq k$. 
We will show that if $\gamma$ is not the zero vector, then $e(\gamma)\succ k\alpha$.

Let $\gamma\neq\vec{0}$. For $j=1,\dots,m$, let $i_j$ be the smallest index $i\in\{1,\dots,n\}$ where $\alpha^{(j)}_i$  differs from $\alpha_i$; then $\alpha^{(j)}_i=\alpha_i$ for each $1\leq i < i_j$, and $\alpha^{(j)}_{i_j}>\alpha_{i_j}$. Let $S_\gamma=\{j:\gamma_j\neq0\}$, and take
\[ \ell = \min\{i_j: j\in S_\gamma\}. \]
Let $e_i(\gamma)$ be the $i^{th}$ component of the exponent $e(\gamma)$. 
For $1\leq i <\ell$, we have by choice of $\ell$ that
\[ e_i(\gamma)=(k-|\gamma|)\alpha_i+\sum_{j\in S_\gamma} \gamma_j\alpha^{(j)}_i =(k-|\gamma|)\alpha_i+\sum_{j\in S_\gamma} \gamma_j\alpha_i=k\alpha_i. \]
Now consider $i=\ell$. Suppose $u\in S_\gamma$ is such that $i_u=\ell$, so that $\alpha^{(u)}_\ell > \alpha_\ell$. Then
\[ e_\ell(\gamma)=(k-|\gamma|)\alpha_\ell+ \sum_{j\in S_\gamma} \gamma_j\alpha^{(j)}_\ell \geq(k-|\gamma|)\alpha_\ell+\gamma_u(\alpha_\ell+1)+\sum_{j\in S_\gamma\setminus\{u\}} \gamma_j\alpha_\ell>k\alpha_\ell \]
where the last inequality holds as $\gamma_u\neq0$. 
Therefore $e(\gamma)\succ k\alpha$, as desired. 
\end{proof}

\begin{lemma}
\label{lem-xbf}
Let $f\in\F_q[x_1,\dots,x_n]$ and suppose that $\alpha\in\Z_{\geq0}^n$ is such that $f=bx^{\alpha}+g$, where $b\neq0$, and every exponent in $g$ is strictly greater than $\alpha$ with respect to $\preceq$. Let $\beta\in\Z_{\geq0}^n$. Then there is some polynomial $\widetilde{g}$ so that
\[ x^\beta f=bx^{\alpha+\beta}+\widetilde{g}\]
and every exponent in $\widetilde{g}$ is strictly greater than $\alpha+\beta$ with respect to $\preceq$.
\end{lemma}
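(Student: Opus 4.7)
The plan is short: expand $x^\beta f$ term by term and then invoke translation-invariance of the lexicographic order on $\Z_{\geq 0}^n$. Distributing $x^\beta$ across the decomposition $f = bx^\alpha + g$ yields
\[ x^\beta f = bx^{\alpha+\beta} + x^\beta g, \]
so I would set $\widetilde{g} := x^\beta g$. The leading coefficient $b$ is preserved because multiplying by a monomial does not alter any coefficients, so the only remaining task is to check that every exponent appearing in $\widetilde{g}$ is strictly $\succ \alpha + \beta$ with respect to $\preceq$.

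Every monomial in $g$ has exponent of the form $\gamma$ with $\gamma \succ \alpha$, and multiplying by $x^\beta$ shifts each such exponent to $\gamma + \beta$. The statement therefore reduces to the following translation-invariance claim: if $\gamma \succ \alpha$, then $\gamma + \beta \succ \alpha + \beta$. This is immediate from the definition of the lexicographic order. If $j$ is the least index for which $\gamma_j > \alpha_j$ (so that $\gamma_i = \alpha_i$ for all $i < j$), then $\gamma_i + \beta_i = \alpha_i + \beta_i$ for all $i < j$, while $\gamma_j + \beta_j > \alpha_j + \beta_j$. By the definition of $\prec$, this gives $\alpha + \beta \prec \gamma + \beta$, as required.

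There is no genuine obstacle here; this lemma is a bookkeeping complement to Lemma \ref{lem-powerf}, recording how multiplication by a monomial (rather than by a power of $f$) shifts the $\preceq$-minimum exponent of a polynomial. The only subtlety worth mentioning is that one does not need to collect like terms from different monomials of $g$, since the map $\gamma \mapsto \gamma + \beta$ is injective, so distinct exponents in $g$ give distinct exponents in $x^\beta g$ and no cancellation between them can occur.
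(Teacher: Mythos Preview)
Your proof is correct and follows exactly the same approach as the paper: the paper's proof consists of the single sentence ``This follows by observing that if $\alpha,\gamma\in\Z_{\geq0}^n$ satisfy $\alpha\prec\gamma$, then $\alpha+\beta\prec\gamma+\beta$,'' which is precisely the translation-invariance claim you identify and prove in detail. Your version simply makes explicit the setting $\widetilde{g}=x^\beta g$ and the verification of translation-invariance that the paper leaves to the reader.
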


\begin{proof}
This follows by observing that if $\alpha,\gamma\in\Z_{\geq0}^n$ satisfy $\alpha\prec\gamma$, then $\alpha+\beta\prec\gamma+\beta$.
\end{proof}

\section{A key lemma}


\begin{lemma}
\label{warmuplemma2}
Let $k,n\in\N$. Let
$\mathcal{I}$ be a subset of $\Z_{\geq0}^n$ so that each $\alpha\in\mathcal{I}$ satisfies $|\alpha|<k(q-1)$, and for each $\alpha\in\mathcal{I}$, $|\alpha|$ is distinct. Let $c_\alpha\in\F_q$, and $b\in \F_q$ with $b\neq0$. Suppose for each $\beta\in\Z_{\geq0}^n$ with $|\beta|<k$ and each non-zero $\rho\in\F_q,$ the value
\[ f_\beta(\rho)=\sum_{\alpha\in\mathcal{I}}b^{\alpha_n-\beta_n}c_\alpha{\alpha\choose\beta}\rho^{|\alpha|-|\beta|}\]
is zero. 
Then $c_\alpha=0$ for all $\alpha\in\mathcal{I}$. 

\end{lemma}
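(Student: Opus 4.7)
The plan is to identify each $f_\beta(\rho)$ with a Hasse derivative of a single polynomial $P$ evaluated along a line through the origin, thereby converting the entire hypothesis into a multiplicity statement along that line, and then invoke the one-variable case of Schwartz-Zippel.

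Concretely, I would set $y=(1,\dots,1,b)\in\F_q^n$ and form the polynomial $P(x)=\sum_{\alpha\in\mathcal{I}}c_\alpha x^\alpha$. A short calculation using the explicit formula in Lemma~\ref{expHasse}, together with $y^{\alpha-\beta}=b^{\alpha_n-\beta_n}$ (and noting that when $\alpha_n<\beta_n$ the factor $\binom{\alpha}{\beta}$ already kills the term), identifies
\[ f_\beta(\rho)=P^{(\beta)}(\rho y). \]
So the hypothesis becomes: $P^{(\beta)}(\rho y)=0$ for every $\beta$ with $|\beta|<k$ and every $\rho\in\F_q^*$, which is exactly the statement that $\text{mult}(P,\rho y)\geq k$ for each nonzero $\rho$.

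I then pass to the univariate restriction $p(t):=P(ty)\in\F_q[t]$. By Lemma~\ref{multcomp}, $\text{mult}(p,\rho)\geq\text{mult}(P,\rho y)\geq k$ for each $\rho\in\F_q^*$, which gives
\[ \sum_{\rho\in\F_q^*}\text{mult}(p,\rho)\geq k(q-1). \]
On the other hand, the hypothesis $|\alpha|<k(q-1)$ for all $\alpha\in\mathcal{I}$ forces $\deg(p)<k(q-1)$. If $p$ were non-zero, the one-variable instance of Lemma~\ref{SZ} (with $A=\F_q^*$) would yield $\sum_{\rho\in\F_q^*}\text{mult}(p,\rho)\leq\deg(p)<k(q-1)$, contradicting the previous display. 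Hence $p$ is the zero polynomial.

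To finish, I expand $p(t)=\sum_{\alpha\in\mathcal{I}}c_\alpha b^{\alpha_n}t^{|\alpha|}$ and use the distinctness of the exponents $|\alpha|$ across $\alpha\in\mathcal{I}$: each coefficient $c_\alpha b^{\alpha_n}$ stands alone as the coefficient of its own power of $t$ in $p$, so $p\equiv 0$ combined with $b\neq 0$ immediately gives $c_\alpha=0$ for every $\alpha\in\mathcal{I}$. I do not anticipate a real obstacle. The only conceptually substantive move is the bookkeeping identification $f_\beta(\rho)=P^{(\beta)}(\rho y)$; once that is in place the degree bound $|\alpha|<k(q-1)$ and the distinctness of the $|\alpha|$ are each used exactly once, for their evident purposes, and the lemmas from Section~2 supply the rest.
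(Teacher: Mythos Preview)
Your proof is correct and follows essentially the same approach as the paper: both form the univariate polynomial $p(t)=\sum_{\alpha}c_\alpha b^{\alpha_n}t^{|\alpha|}$ (the paper calls it $f_0$), establish multiplicity $\geq k$ at every nonzero $\rho$, apply Lemma~\ref{SZ}, and then read off $c_\alpha=0$ from the distinctness of the $|\alpha|$. The only difference is packaging of the multiplicity step: the paper computes $f_0^{(w)}(\rho)=\sum_{|\beta|=w}b^{\beta_n}f_\beta(\rho)$ directly via Vandermonde's identity (Lemma~\ref{lemma:bincoeff}), whereas you lift to the $n$-variate polynomial $P$, identify $f_\beta(\rho)=P^{(\beta)}(\rho y)$, and descend via Lemma~\ref{multcomp}.
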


\begin{proof}

Consider $f_0(\rho)=\sum_{\alpha\in\mathcal{I}}b^{\alpha_n}c_\alpha \rho^{|\alpha|}$, which we view as a univariate polynomial in $\rho$. As $|\alpha|<kq$ for each $\alpha\in\mathcal{I}$ by assumption, the degree of $f_0$ is less than $k(q-1)$. We will show that $f_0$ vanishes on $\F_q$ with multiplicity $k$, and then use the strengthened Schwartz-Zippel lemma to show $f_0$ is the zero polynomial.

To this end, let $0\leq w<k$.
Using Lemmas~\ref{expHasse} and~\ref{lemma:bincoeff}, the $w$-th Hasse derivative of $f_0$ is
\begin{align*} 
f_0^{(w)}(\rho)=\sum_{\alpha\in\mathcal{I}}b^{\alpha_n}c_\alpha{|\alpha|\choose w}\rho^{|\alpha|-w}&=\sum_{\alpha\in\mathcal{I}}b^{\alpha_n}c_\alpha \sum_{\beta:|\beta|=w}{\alpha\choose \beta} \rho^{|\alpha|-|\beta|}.
\end{align*}
Switching the sums and using the assumption that $f_\beta(\rho)=0$ for each $\rho\neq0$ and $|\beta|=w$, we see that
\[ f_0^{(w)}(\rho)=\sum_{\beta:|\beta|=w}\left(\sum_{\alpha\in\mathcal{I}}b^{\alpha_n}c_\alpha {\alpha\choose \beta} \rho^{|\alpha|-|\beta|}\right)=\sum_{\beta:|\beta|=w}b^{\beta_n}f_\beta(\rho)=0 \]
for each non-zero $\rho\in\F_q$.
Thus $f_0$ vanishes on $\F_q\setminus\{0\}$ with multiplicity $k$, as claimed. 
Therefore
\[ \sum_{\rho\in\F_q\setminus\{0\}}\text{mult}(f_0,\rho)\geq k(q-1) >\text{deg}(f_0)\]
and so by Lemma~\ref{SZ},
$f_0$ is the zero polynomial. 
By assumption, $|\alpha|$ is distinct for each $\alpha\in\mathcal{I}$, and so the coefficients of $f_0$  are $\{b^{\alpha_n}c_\alpha :\alpha\in\mathcal{I}\}$. Since each of these coefficients is zero, and $b\neq0$,  we have $c_\alpha=0$ for all $\alpha\in\mathcal{I}$, as desired.

\end{proof}

\section{Warm up argument in $\F_q^2$}

\label{secdim2}

\begin{proposition}
\label{warmup}
Let $q>2$ and let $S\subset\F_q^2$ be a set satisfying the following: for all $\rho\in\F_q$, there exists some $a\in \F_q^2$ such that
\begin{equation} 
\label{subseteq}
\{a+\rho(\lambda,\lambda^2):\lambda\in\F_q\}\subset S.
\end{equation}
Let $k\in\N$ be a multiple of $q$, let $D=k(q-1)-1$ and $M=3k-4k/q$. Then
\begin{equation} 
\label{warmupineq}
{M+1\choose 2}\cdot|S|\geq{D+2\choose 2}.
\end{equation}
\end{proposition}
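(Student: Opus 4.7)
The plan is to argue by contradiction. Suppose $\binom{M+1}{2}\cdot|S|<\binom{D+2}{2}$; Lemma~\ref{exists} (with $n=2$) then produces a nonzero polynomial $P\in\F_q[x_1,x_2]$ of degree at most $D$ that vanishes on $S$ with multiplicity $M$. The aim is to derive a contradiction by showing that $P$ must itself be the zero polynomial.

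The first step is to apply Lemma~\ref{derivszero} with $\ell=2$ and $g(s)=s^2$. One checks the hypothesis~(\ref{ineqMDgen}): substituting $D=k(q-1)-1$ and $M=3k-4k/q$, the inequality $2(D-w)<(M-w)q$ rearranges to $(k-w)(q-2)+2>0$, which holds for all $0\leq w<k$ since $q>2$. The lemma then gives, for every nonzero $\rho\in\F_q$ and every $\beta\in\Z_{\geq0}^2$ with $|\beta|<k$, that the polynomial $P_{\beta,\rho}(s):=P^{(\beta)}(a(\rho)+\rho(s,s^2))$ vanishes identically in $s$, where $a(\rho)$ is the shift furnished by~(\ref{subseteq}).

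The central difficulty is that $a(\rho)$ depends on $\rho$, so no single substitution eliminates $P$ directly. To circumvent this, I would extract the coefficient of the top power of $s$, which is free of $a$. Writing $P=\sum_\alpha c_\alpha x_1^{\alpha_1}x_2^{\alpha_2}$ and introducing the weighted degree $w(\alpha)=\alpha_1+2\alpha_2$, let $N=\max\{w(\alpha):c_\alpha\neq 0\}$ and $\mathcal{I}=\{\alpha:c_\alpha\neq 0,\ w(\alpha)=N\}$. Expanding via Lemma~\ref{expHasse},
\[ P^{(\beta)}(a+\rho(s,s^2))=\sum_\alpha c_\alpha\binom{\alpha}{\beta}(a_1+\rho s)^{\alpha_1-\beta_1}(a_2+\rho s^2)^{\alpha_2-\beta_2}; \]
each summand has $s$-degree at most $w(\alpha)-w(\beta)$, with equality precisely when $\alpha\in\mathcal{I}$, and in that case the top $s$-coefficient of the summand is $c_\alpha\binom{\alpha}{\beta}\rho^{|\alpha|-|\beta|}$ (the $a$-dependence drops out). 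Reading off the $s^{N-w(\beta)}$ coefficient from the identity $P_{\beta,\rho}\equiv 0$ therefore yields
\[ \sum_{\alpha\in\mathcal{I}} c_\alpha\binom{\alpha}{\beta}\rho^{|\alpha|-|\beta|}=0\qquad\text{for all }\rho\in\F_q\setminus\{0\}\text{ and }|\beta|<k. \]

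The final step is to invoke Lemma~\ref{warmuplemma2} with $n=2$ and $b=1$. Its hypotheses are straightforward to verify: each $|\alpha|\leq\deg(P)\leq D<k(q-1)$; and since $w(\alpha)=N$ forces $\alpha_1=N-2\alpha_2$, the map $\alpha\mapsto|\alpha|=N-\alpha_2$ is injective on $\mathcal{I}$, so the values $|\alpha|$ are all distinct. The lemma then concludes $c_\alpha=0$ for every $\alpha\in\mathcal{I}$, contradicting the nonemptiness of $\mathcal{I}$ forced by the choice of $N$. The main obstacle I anticipate is the bookkeeping in the third step: isolating precisely which $\alpha$ contribute to the top $s$-coefficient and confirming that this contribution is $a$-independent, since this is the structural move that converts the pointwise vanishing of $P^{(\beta)}$ along each parabola into a single linear system governing the weighted-homogeneous top part of $P$.
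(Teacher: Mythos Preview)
Your proposal is correct and follows essentially the same route as the paper's own proof: contradiction via Lemma~\ref{exists}, verification of~(\ref{ineqMDgen}), application of Lemma~\ref{derivszero} to kill each $P_{\beta,\rho}$, extraction of the top weighted-degree coefficient (which is $a$-independent), and the finishing appeal to Lemma~\ref{warmuplemma2} with $b=1$. Your explicit rearrangement $(k-w)(q-2)+2>0$ and your remark that the $a$-dependence drops out of the leading $s$-coefficient are slightly more detailed than the paper's presentation, but the argument is the same.
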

We would obtain the bound from Theorem~\ref{thm} with $n=\ell=2$ by taking $k\to\infty$ in the inequality~(\ref{warmupineq}). We will complete this step in the full proof of Theorem~\ref{thm}, but skip it for this warm up argument.


\begin{proof}[Proof of Proposition \ref{warmup}]
Towards contradiction, assume that the conclusion of the proposition is false. Then by Lemma~\ref{exists} there exists a non-zero polynomial $P\in \F_q[x_1,x_2]$ of degree at most $D$ vanishing on $S$ with multiplicity $M$. 
Observe that our choice of $M$ and $D$ satisfies the inequalities in~(\ref{ineqMDgen}). 
This follows after verifying the inequalities $\ell D<Mq$ and $\ell(D-k)<(M-k)q$, the first of which relies on the assumption that $q>2$.

Write
\[ P(x_1,x_2)=\sum_{|\alpha|\leq D} c_\alpha x_1^{\alpha_1}x_2^{\alpha_2}.\]
By Lemma~\ref{derivszero} with $\beta=(0,0)$, we see that
\begin{equation}
\label{Prho'}
P_\rho(t)=\sum_{|\alpha|\leq D}c_\alpha(a_1+\rho t)^{\alpha_1}(a_2+\rho t^2)^{\alpha_2}\end{equation}
is the zero polynomial.

Notice that the term in~(\ref{Prho'}) associated with $\alpha$ is a polynomial of degree at most $\alpha_1+2\alpha_2$. As such, we define
\[ \mathcal{A}_{j}=\{\alpha:|\alpha|\leq D, \; \alpha_1+2\alpha_2=j\}. \]
Since $P$ is non-zero, 
\[ m = \max\{j:\text{there exists }\alpha\in \mathcal{A}_j \text{ such that } c_\alpha\neq0\} \]
exists.

Now let $\beta\in\Z_{\geq0}^2$ be such that $|\beta|<k$ and let $P_\beta=P^{(\beta)}$ be the $\beta$-th Hasse derivative of $P$. Moreover, let $\rho\neq0$ and $P_{\beta,\rho}(t)=P_\beta(a+\rho(t,t^2))$, which we know is the zero polynomial by Lemma~\ref{derivszero}. 
Using the partition of the exponents $\alpha$ given by the sets $\mathcal{A}_j$, and recalling Lemma~\ref{expHasse}, we may write
\begin{equation}
\label{Prhored}
P_{\beta,\rho}(t)=\sum_{j=0}^m \sum_{\alpha\in\mathcal{A}_{j}}c_\alpha{\alpha\choose\beta}(a_1+\rho t)^{\alpha_1-\beta_1}(a_2+\rho t^2)^{\alpha_2-\beta_2}.
\end{equation}
Let $w=\beta_1+2\beta_2$. The summand in~(\ref{Prhored}) corresponding to $\alpha\in\mathcal{A}_j$ is a polynomial of degree at most $j-w$. Thus the highest power of $t$ present in~(\ref{Prhored}) is $m-w$, and it may only appear as the highest power term of a summand corresponding to $\alpha\in\mathcal{A}_m$. Therefore the coefficient of $t^{m-w}$ is
\[ g_{\beta}(\rho):=\sum_{\alpha\in\mathcal{A}_m}c_\alpha{\alpha\choose\beta}\rho^{|\alpha|-|\beta|}. \]
Since $P_{\beta,\rho}$ is the zero polynomial, $g_\beta(\rho)=0$ for all $|\beta|<k$ and all $\rho\neq0$. Moreover, for each $\alpha\in\mathcal{A}_m$, $|\alpha|$ will be distinct, and so Lemma~\ref{warmuplemma2} with $b=1$ implies that $c_\alpha=0$ for all $\alpha\in\mathcal{A}_m$, contradicting our choice of $m$. 

\end{proof}

\section{Proof of Theorem~\ref{thm} assuming Proposition~\ref{prop}}
\label{secthm}

Let $g\in\F_q[s_1,\dots,s_{n-1}]$ be a homogeneous degree $\ell$ polynomial, with $\ell\geq2$. Suppose $S\subset\F_q^n$ satisfies: for any $\rho\in\F_q$, there is some $a\in\F_q^n$ and some polynomial $g_\rho\in\F_q[s_1,\dots,s_{n-1}]$ with homogeneous part of highest degree equal to $g$, so that
\[ \{a+\rho (\lambda, g_\rho(\lambda)):\lambda\in\F_q^{n-1}\}\subset S. \]
Assume towards contradiction that there is some $k\in \N$, a multiple of $q$, so that for
\[ D=k(q-1)-1, \quad M=(\ell+1)k-2\ell k/q \]
we have
\begin{equation} 
\label{contassump}
{M+n-1\choose n}\cdot|S|<{D +n\choose n}.
\end{equation}
Then by Lemma~\ref{exists}, there exists a non-zero polynomial $P\in \F_q[x_1,\dots,x_n
]$ of degree at most $D$ vanishing on $S$ with multiplicity $M$. 
Observe that our choice of $M$ and $D$ satisfies the inequalities in~(\ref{ineqMDgen}). 
This follows after verifying the inequalities $\ell D<Mq$ and $\ell(D-k)<(M-k)q$, the first of which relies on the assumption that $q>\ell$.

Let $d$ be the degree of $P$, and suppose $P(x)=\sum_{|\alpha|\leq d} c_\alpha x^\alpha.$
For $\rho\in\F_q$, by Lemma~\ref{derivszero} with $\beta=(0,0)$, 
the $(n-1)$-variate polynomial 
\[ P_\rho(s)=P(a+\rho(s,g_\rho(s))) \]
is the zero polynomial.
We have
\begin{align}
\label{Prhog2}
P_\rho(s)&=\sum_{|\alpha|\leq d} c_\alpha (a'+\rho s)^{\alpha'}(a_n+\rho g_\rho(s))^{\alpha_n}
\end{align}
where $a'=(a_1,\dots,a_{n-1})$, and $\alpha'$ is defined similarly.
Since $g_\rho$ has degree $\ell$, the term in~(\ref{Prhog2})
associated with $\alpha$ is a polynomial in $s$ of degree at most $d(\alpha)=\alpha_1+\dots+\alpha_{n-1}+\ell \alpha_n$. 
We will partition the exponents $\alpha$ according to the value of $d(\alpha)$. As such, define
\[ \mathcal{A}_{j}=\{\alpha\in\Z_{\geq0}^n:\alpha_1+\dots+\alpha_{n-1}+\ell \alpha_n=j\}. \]
Since $P$ is non-zero, 
\[ m = \max\{j:\text{there exists }\alpha\in \mathcal{A}_j \text{ such that } c_\alpha\neq0\} \]
exists.

Now let $\beta\in\Z_{\geq0}^n$ be such that $|\beta|<k$, and let $P_\beta=P^{(\beta)}$ be the $\beta$-th Hasse derivative of $P$. Then Lemma~\ref{derivszero} implies that $P_{\beta,\rho}(s)=P_\beta(a+\rho(s,g_\rho(s)))$ is the zero polynomial. 
Using the partition of the exponents $\alpha$ given by the sets $\mathcal{A}_j$, and recalling Lemma~\ref{expHasse}, we may write
\begin{equation}
\label{Pbr}
P_{\beta,\rho}(s)=\sum_{j=0}^m \sum_{\alpha\in\mathcal{A}_{j}}c_\alpha{\alpha\choose\beta}(a'+\rho s)^{\alpha'-\beta'}(a_n+\rho g_\rho(s))^{\alpha_n-\beta_n},
\end{equation}
where $\beta'=(\beta_1,\dots,\beta_{n-1})$.

Now assume $\rho\neq0$, and let $w=|\beta'|+\ell\beta_n$. The summand in~(\ref{Pbr}) corresponding to $\alpha\in\mathcal{A}_j$ is a polynomial of degree at most $j-w$. Thus terms of degree $m-w$ can only appear as the highest degree term of summands in~(\ref{Pbr}) corresponding to $\alpha\in\mathcal{A}_m$. Therefore the part of $P_{\beta,\rho}$ of degree $m-w$ can be expressed as
\[ \widetilde{P}_{\beta,\rho}(s)=\sum_{\alpha\in\mathcal{A}_m}c_\alpha {\alpha\choose\beta} (\rho s)^{\alpha'-\beta'}(\rho g(s))^{\alpha_n-\beta_n}
\]
since $g$ is the homogeneous part of $g_\rho$ of highest degree (i.e., degree $\ell$). Since $P_{\beta,\rho}$ is the zero polynomial, the coefficient of each of its degree $m-w$ monomials is zero, so $\widetilde{P}_{\beta,\rho}$ is the zero polynomial as well.

Let $\widetilde{P}(x)=\sum_{\alpha\in\mathcal{A}_m}c_\alpha x^\alpha$. Notice that $\widetilde{P}_{\beta,\rho}(s)=\widetilde{P}^{(\beta)}(\rho(s,g(s)))$. 
Since $\widetilde{P}_{\beta,\rho}$ is the zero polynomial for all $|\beta|<k$ and all non-zero $\rho\in\F_q$, Proposition~\ref{prop} implies that $\widetilde{P}$ is the zero polynomial. But then
 $c_\alpha=0$ for all $\alpha\in\mathcal{A}_m$, contradicting the choice of $m$. Therefore our assumption in~(\ref{contassump}) is false, and so for every $k\in\N$ that is a multiple of $q$, we have
 \[ |S|\geq \dfrac{{D+n\choose n}}{{M+n-1\choose n}}=\dfrac{\prod_{i=1}^n(D+i)}{\prod_{i=1}^n(M-1+i)}=\dfrac{\prod_{i=1}^n(k(q-1)-1+i)}{\prod_{i=1}^n((\ell+1)k-2\ell k/q-1+i)}.\]
 Rewrite this inequality as
  \[ |S|\geq \dfrac{\prod_{i=1}^n(q-1+(i-1)/k)}{\prod_{i=1}^n(\ell+1-2\ell /q+(i-1)/k)}. \]
Since this inequality needs to hold for arbitrarily large $k$ that are multiples of $q$, we obtain $|S|\geq (q-1)^n/(\ell+1-2\ell/q)^n$.

\section{Proof of Proposition~\ref{prop}}
\label{secprop}

\subsection{Proof in dimension $n=2$}

Let $Q\in \F_q[x_1,x_2]$ be the polynomial of degree $d<k(q-1)$ and $f\in \F_q[t]$ be a non-zero homogeneous polynomial of degree $\ell$ as in the statement of Proposition~\ref{prop}, so that $Q_{\beta,\rho}(s)=Q^{(\beta)}(\rho(t,f(t)))$ is the zero polynomial for all non-zero $\rho\in\F_q$ and $|\beta|<k$. We may write $Q(x)=\sum_{\alpha\in I}c_\alpha x^\alpha$ where each $\alpha\in I$ satisfies
\begin{equation} 
\label{alphacond}
\alpha_1+\ell\alpha_2=m,
\end{equation}
and we may write $f(t)=bt^\ell$, with $b\neq0$. 

Let $\beta\in\Z_{\geq0}^2$ be such that $|\beta|<k$, and let $w=\beta_1+\ell \beta_2$. By Lemma~\ref{expHasse}, we have
\[ Q_{\beta,\rho}(t)=\sum_{\alpha\in I}c_\alpha{\alpha\choose\beta} (\rho t)^{\alpha_1-\beta_1}(\rho bt^\ell)^{\alpha_2-\beta_2}=\left(\sum_{\alpha\in I}b^{\alpha_2-\beta_2}c_\alpha {\alpha\choose\beta}\rho^{|\alpha|-|\beta|}\right )t^{m-w} \]
where the last equality uses (\ref{alphacond}). By assumption, this is the zero polynomial, and so we have 
\[ \sum_{\alpha\in I}b^{\alpha_2-\beta_2}c_\alpha {\alpha\choose\beta}\rho^{|\alpha|-|\beta|}=0 \quad \text{for each} \quad \rho\in\F_q\setminus\{0\}, \; |\beta|<k. \]
Moreover, since $Q$ has degree $d<k(q-1)$, each $\alpha\in I$ satisfies $|\alpha|<k(q-1)$, and as each $\alpha\in I$ satisfies (\ref{alphacond}) with $\ell\geq2$, we know that $|\alpha|$ is distinct for each $\alpha\in I$. Then by Lemma~\ref{warmuplemma2}, $c_\alpha=0$ for each $\alpha\in I$.

\subsection{Proof in dimensions $n>2$}

Let $x=(x_1,\dots,x_n)$ and $s=(s_1,\dots,s_{n-1})$. 
Let $Q\in\F_q[x]$ be a polynomial of degree $d<k(q-1)$ so that 
$Q(x)=\sum_{\alpha\in I }c_\alpha x^\alpha$ where each $\alpha\in I$ satisfies 
\begin{equation}
\label{exprule}
\alpha_1+\dots+\alpha_{n-1}+\ell\alpha_n=m.
\end{equation}
Let $f\in\F_q[s]$ be a homogeneous polynomial of degree $\ell$ so that for all $|\beta|<k$ and all $\rho\neq0$, 
\[ Q_{\beta,\rho}(s)=Q^{(\beta)}(\rho(s,f(s)))  \]
is the zero polynomial. 
By~(\ref{exprule}) and choice of $f$, we have that $Q_\rho(s):=Q(\rho(s,f(s)))$
is a homogeneous polynomial of degree $m$. As such, all its exponents take the form $(\i,m-|\i|)$ for $\i\in \Z_{\geq0}^{n-2}$. 
Henceforth, when we compare two exponent vectors, this comparison is with respect to the lexicographical order.

Let $e=(\widetilde{e},\ell-|\widetilde{e}|)\in\Z_{\geq0}^{n-2}\times \Z_{\geq0}$ be the smallest exponent occurring in $f$; that is, we may write 
\begin{equation} 
\label{decompf}
f(s) = bs^e+g(s) 
\end{equation}
where $b\neq0$, and either $g$ is the zero polynomial, or every exponent in $g$ is strictly greater than $e$. 
For $\alpha\in\Z_{\geq0}^n$, let $\alpha'=(\alpha_1,\dots,\alpha_{n-1})$, and similarly for $\beta$. 
By Lemma~\ref{expHasse} and then~(\ref{decompf}), we have
\begin{align}
Q_{\beta,\rho}(s)&=\sum_{\alpha\in I}c_\alpha{\alpha\choose\beta}(\rho s)^{\alpha'-\beta'}(\rho f(s))^{\alpha_n-\beta_n} \nonumber  \\ &=\sum_{\alpha\in I}c_\alpha{\alpha\choose\beta}\rho^{|\alpha|-|\beta|} s^{\alpha'-\beta'}(bs^e+g(s))^{\alpha_n-\beta_n}. \label{Qbr}
\end{align}
Let $\widetilde{Q}_{\alpha,\beta,\rho}(s)$ be the summand in~(\ref{Qbr}) corresponding to $\alpha\in I$. If there is some $i$ such that $\alpha_i<\beta_i$, then $\widetilde{Q}_{\alpha,\beta,\rho}(s)$ is identically zero. 
Otherwise, by Lemma~\ref{lem-powerf}, we may write $f^{\alpha_n-\beta_n}$ as 
\[ (f(s))^{\alpha_n-\beta_n} = b^{\alpha_n-\beta_n} s^{(\alpha_n-\beta_n)e} +g_{\alpha_n-\beta_n}(s) \]
where either $g_{\alpha_n-\beta_n}$ is the zero polynomial, or every exponent in $g_{\alpha_n-\beta_n}$ is strictly greater than $(\alpha_n-\beta_n) e$. By Lemma \ref{lem-xbf}, if $c_\alpha\neq0$, the smallest exponent appearing in
\[ \widetilde{Q}_{\alpha,\beta,\rho}(s)=c_\alpha{\alpha\choose\beta}\rho^{|\alpha|-|\beta|}s^{\alpha'-\beta'}\left(b^{\alpha_n-\beta_n}s^{(\alpha_n-\beta_n)e}+g_{\alpha_n-\beta_n}(s)\right)\]
is 
\[ d(\alpha,\beta):=\alpha'-\beta'+(\alpha_n-\beta_n)e \]
and its corresponding coefficient is 
\begin{equation}
\label{mincoeff}
 b^{\alpha_n-\beta_n}c_\alpha{\alpha\choose\beta}\rho^{|\alpha|-\beta|}.    
\end{equation}

We will partition $I$ using values of $d(\alpha,\vec{0})$ into sets $I_{\j}$. 
For $\j\in\Z_{\geq0}^{n-2}$, let $I_{\j}$ be the set of $\alpha\in I$ satisfying
\[ d(\alpha,\vec{0}) = (\j,m-|\j|);\]
in particular, each $\alpha\in I_{\j}$ has the form
\begin{equation}
\label{formalpha}
\alpha=(j_1-\alpha_n e_1, \dots,j_{n-2} -\alpha_n e_{n-2},m-|\j|-\alpha_n e_{n-1},\alpha_n).
\end{equation}
Notice that in this equation, $\j$, $m$, and $e$ are fixed, while $\alpha_n$ may take on any nonnegative integer value that is at most $d$, the degree of $Q$, and that makes each component of~(\ref{formalpha}) nonnegative. Using the relationship 
\[ d(\alpha,\beta)=d(\alpha,\vec{0})-\beta'-\beta_ne\]
and letting $\widetilde{\beta}=(\beta_1,\dots,\beta_{n-2})$, we see that
if $\alpha\in I_{\j}$, then either $\widetilde{Q}_{\alpha,\beta,\rho}$ is the zero polynomial, or has smallest exponent
\begin{equation}
\label{minexp}
d_{\j}(\beta):=(\j-\widetilde{\beta}-\beta_n\widetilde{e}, m-|\j|-\beta_{n-1}-\beta_ne_{n-1}).
\end{equation}
Moreover, if $\j\prec \i$, then $\j-\widetilde{\beta}-\beta_n\widetilde{e}\prec \i-\widetilde{\beta}-\beta_n\widetilde{e}$, and so 
\begin{equation}
\label{compdjb}
\text{if }\; \j\prec\i, \; \text{ then } \; d_{\j}(\beta)\prec d_{\i}(\beta).
\end{equation}

Now assume towards contradiction that there is some $\alpha\in I$ so that $c_\alpha\neq0$. By Proposition~\ref{lexwellord}, the minimum (with respect to $\preceq$) of the set
\[ \{\i: \exists \alpha\in I_{\i} \; \text{ s.t.}\; c_\alpha\neq0 \} \]
exists; we will call this minimum exponent $\j$. By choice of $\j$, we can write
\[ Q_{\beta,\rho}(s)=\sum_{\i\succeq\j}\sum_{\alpha\in I_{\i}}\widetilde{Q}_{\alpha,\beta,\rho}(s). \]
To derive a contradiction, we will consider the coefficient of $s^{d_{\j}(\beta)}$ in $Q_{\beta,\rho}$.

Now let $|\beta|<k$, and first assume that
\begin{equation} 
\label{betaineq}
\beta_i+\beta_ne_i\leq j_i  \text{ for all }  1\leq i \leq n-2, \text{ and } \beta_{n-1}+\beta_ne_{n-1}\leq m-|\j|. 
\end{equation}
These inequalities guarantee each component of $d_{\j}(\beta)$ is non-negative. If $\i\succ\j$ and $\alpha\in I_{\i}$, then the exponent $d_{\j}(\beta)$ does not appear in $\widetilde{Q}_{\alpha,\beta,\rho}$, by~(\ref{compdjb}). Therefore we need only consider $\alpha\in I_{\j}$. For such $\alpha$, the coefficient of $s^{d_{\j}(\beta)}$ is given in~(\ref{mincoeff}). Thus the coefficient of $s^{d_{\j}(\beta)}$ in $Q_{\beta,\rho}$ is 
\[ q_\beta(\rho):=\sum_{\alpha\in I_{\j}}b^{\alpha_n-\beta_n}c_\alpha{\alpha\choose\beta} \rho^{|\alpha|-|\beta|},\]
and as we assume $Q_{\beta,\rho}$ is the zero polynomial for all non-zero $\rho$, $q_\beta(\rho)=0$ for all non-zero $\rho$.

Now suppose one of the inequalities in~(\ref{betaineq}) fails to hold, say $\beta_i+\beta_ne_i>j_i$. Then using the form of $\alpha$ in~(\ref{formalpha}), we'd have
\[ \beta_i+\beta_ne_i>\alpha_i+\alpha_ne_i,\]
in which case one of $\beta_i>\alpha_i$ or $\beta_n>\alpha_n$ must hold. Then ${\alpha\choose\beta}=0$, and so we'd still have $q_\beta(\rho)=0$ for all non-zero $\rho$.

Therefore $q_{\beta}(\rho)=0$ for all $\rho\in \F_q\setminus\{0\}$ and all $|\beta|<k$. Moreover, since $Q$ has degree $d<k(q-1)$, each $\alpha\in I_{\j}$  satisfies $|\alpha|<k(q-1)$. 
Further, the form of $\alpha\in I_{\j}$ in (\ref{formalpha}) implies that $\alpha_n$ is distinct for each $\alpha\in I_{\j}$, and so using~(\ref{exprule}) and recalling that $\ell\geq2$, we see that
\[ |\alpha|=m-(\ell-1)\alpha_{n} \]
is distinct for each $\alpha\in I_{\j}$. 
Then Lemma~\ref{warmuplemma2} implies that $c_\alpha=0$ for all $\alpha\in I_{\j}$, contradicting our choice of $\j$. 

\section{Acknowledgements}
This work was funded by NSERC Discovery Grants 22R80520 and GR010263. Many thanks to my advisor, Malabika Pramanik, for very helpful discussions while working on this problem and preparing this article.


\bibliographystyle{amsplain}

\bigskip

\noindent{\sc Department of Mathematics, UBC, Vancouver,
B.C. V6T 1Z2, Canada}

\smallskip

\noindent{\it  ctrainor@math.ubc.ca}

\end{document}